\newcommand{\upi}{\underline{\pi}}
\newtheorem{theorem}{Theorem}[section]
\newtheorem{lem}[theorem]{Lemma}
\newtheorem{proposition}[theorem]{Proposition}
\newtheorem{cor}[theorem]{Corollary}
\newtheorem{conjecture}[theorem]{Conjecture}
\theoremstyle{definition}
\newtheorem{dfn}[theorem]{Definition}
\newtheorem{ex}[theorem]{Example}
\newtheorem{rmk}[theorem]{Remark}
\newtheorem{ntn}[theorem]{Notation}
\numberwithin{theorem}{section}
\newenvironment{theorem_no_number}[1][]{\begin{trivlist}
\item[\hskip \labelsep {\bfseries Theorem \def\temp{#1}\ifx\temp\empty  #1\else  #1\fi
.}] \itshape}  {\end{trivlist}}
\newenvironment{prop_no_number}[1][]{\begin{trivlist}
\item[\hskip \labelsep {\bfseries Proposition \def\temp{#1}\ifx\temp\empty  #1\else  #1\fi
.}] \itshape}  {\end{trivlist}}
\newenvironment{cor_no_number}[1][]{\begin{trivlist}
\item[\hskip \labelsep {\bfseries Corollary \def\temp{#1}\ifx\temp\empty  #1\else  #1\fi			.}] \itshape}  {\end{trivlist}}
\renewenvironment{proof}[1][]{\begin{trivlist}
\item[\hskip \labelsep {\bfseries Proof  \def\temp{#1}\ifx\temp\empty  #1\else  (#1)\fi
}]}{\hfill\(\square\) \end{trivlist}}
\DeclareMathOperator{\Aut}{Aut}
\DeclareMathOperator{\End}{End}
\DeclareMathOperator{\GL}{GL}
\DeclareMathOperator{\modns}{mod}
\newcommand{\Z}{\mathbb{Z}}
\newcommand{\Q}{\mathbb{Q}}
\newcommand{\C}{\mathbb{C}}
\newcommand{\R}{\mathbb{R}}
\newcommand{\cA}{\mathcal{A}}
\newcommand{\cP}{\mathcal{P}}
\newcommand{\cQ}{\mathcal{Q}}
\newcommand{\cR}{\mathcal{R}}
\newcommand{\cS}{\mathcal{S}}
\newcommand{\wt}[1]{\widetilde{#1}}
\newcommand{\mbS}{\mathbb{S}}
\title{Equations in virtually abelian groups: languages and growth}
\author{Alex Evetts and Alex Levine}
\address{Department of Mathematics, The University of Manchester, Manchester, M13 9PL, UK}
\email{alex.evetts@manchester.ac.uk}
\address{School of Mathematical and Computer Sciences, Heriot-Watt University,
Edinburgh, EH14 4AS, UK}
\email{aal7@hw.ac.uk}
\keywords{equations in groups, virtually abelian groups, EDT0L languages,
growth of groups}
\subjclass[2010]{03D05, 20F10, 20F65, 20K35, 68Q45}
\begin{document}

\begin{abstract}
	This paper explores the nature of the solution sets of systems of equations in
	virtually abelian groups. We view this question from two angles. From a formal
	language perspective, we prove that the set of solutions to a system of
	equations forms an EDT0L language, with respect to a natural normal form.
	Looking at growth, we show that the growth series of the language of solutions
	is rational. Furthermore, considering the set of solutions as a set of tuples
	of group elements, we show that it has rational relative growth series with
	respect to any finite generating set.
\end{abstract}

\maketitle

\section{Introduction}
	An \textit{equation} with set of variables
	\(V\) in a group \(G\) is an identity \(w = 1\), for any \(w \in G \ast F_V\),
	where \(F_V\) denotes the free group on \(V\).
	Equations in groups have been a significant area of research, particularly
	since Makanin proved that it is decidable whether a system of equations in a
	free group admits a solution, in a series of papers including
	\cite{Makanin_systems}, \cite{Makanin_semigroups} and
	\cite{Makanin_eqns_free_group}. Following on from this,
	Razborov described the set of solutions to systems of equations in free groups
	(\cite{Razborov_thesis}, \cite{Razborov_english}). More recently Ciobanu,
	Diekert and Elder \cite{eqns_free_grps} proved that the solution set to a
	system of equations in a free group is an EDT0L language (described as reduced
	words over a basis). This was generalised to virtually free groups in
	\cite{VF_eqns}, and to all hyperbolic groups in \cite{eqns_hyp_grps}. Diekert,
	Je\.{z} and Kufleitner \cite{EDT0L_RAAGs} showed that this also holds for
	right-angled Artin groups.


	The class of EDT0L languages was introduced by Rozenberg \cite{ET0LDef} in 1973. They are an example of L-systems, which where themselves introduced in
	order to model growth of organisms. A key fact about this growth is that it
	occurs in parallel across the organism, and this is reflected in the
	definition of EDT0L systems. Every regular language is EDT0L, and every EDT0L
	language is an indexed language, and thus context-sensitive. There exist
	context-free languages which are not EDT0L, and EDT0L languages which are not
	context-free.

	The use of EDT0L languages in describing solutions to equations, rather than
	the more studied class of context-free languages, arises from the fact that
	equations can have arbitrarily many variables. When expressing the solution
	\((x_1, \ \ldots, \ x_n)\) as a word, we write it in the form \(x_1 \# \cdots
	\# x_n\), for some new letter \(\#\). It follows that even in
	\(\mathbb{Z}=\langle a \mid \rangle\), equations with \(3\) or more variables
	will not necessarily be context-free, as the system \(X = Y = Z\) will have
	the solution language \(\{a^m \# a^m \# a^m \mid m \in \mathbb{Z}\}\), which
	is not a context-free language over \(\{a, \ a^{-1}, \ \#\}\).

	It has long been regarded as `folklore' that it is decidable whether systems
	of equations in virtually abelian groups admit solutions, however it is
	unclear when this was first proved. In \cite{elementary_gp_theories} the
	stronger result that virtually abelian groups have decidable first order
	theory is shown. A more direct proof of the solubility of equations in
	virtually abelian groups can be found in Lemma 5.4 of \cite{Dahmani}. In this
	paper we study the properties of solution sets of systems of equations in finitely generated
	virtually abelian groups. Such sets are also known as \emph{algebraic sets}.

	Given a choice of finite generating set, and a corresponding normal form, we
	study the language of representatives for algebraic sets. These will be called
	solution languages (see Definition \ref{def:sollanguages}). In Section
	\ref{sec:eqva} we show that the solution languages (with respect to a suitable
	generating set and normal form) are EDT0L. This will be a consequence of the
	stronger result that they are accepted by \emph{multivariable finite-state
	automata} (see Definition \ref{def:mFSA}):

	\begin{theorem_no_number}[\ref{theorem:VA_n_reg}]
		The solution language to any system of equations in virtually abelian group is accepted by a multivariable finite-state automaton.
	\end{theorem_no_number}

	\begin{cor_no_number}[\ref{VA_EDT0L_cor}]
		The solution language to any system of equations in a virtually
		abelian group is EDT0L.
	\end{cor_no_number}


	Many of the papers that show that solution languages to systems of equations
	are EDT0L also consider the space complexity of the algorithms which
	constructs the EDT0L systems. We also show that both the multivariable
	finite-state automata and the EDT0L systems that accept these solution
	languages can be constructed in non-deterministic quadratic space (see Proposition \ref{prop:quadspace}).

	It is a standard fact that every regular language has rational growth series.
	That is, the generating function which counts the number of words in the
	language with increasing length lies in the ring of rational functions
	$\Q(z)$. A result of Chomsky and Sch\"utzenberger \cite{CS} asserts that the
	growth series of every unambiguous context-free language is algebraic over
	$\Q(z)$. In contrast, there is no reason to expect that those EDT0L languages
	which do not fall under these two cases have well-behaved growth series.
	Indeed, Corollary 8 of \cite{appl_L_systems_GT} implies that there are EDT0L
	languages with transcendental (i.e. non-algebraic) growth series. A priori,
	the language obtained in Corollary \ref{VA_EDT0L_cor} is neither regular nor
	context-free. Nevertheless, we prove that its growth series is rational.

	\begin{prop_no_number}[\ref{prop:EDT0Lrational}]
		The solution language to any system of equations in a virtually abelian group has rational growth series.
	\end{prop_no_number}

	Algebraic sets in groups can be seen as an analogue of the fundamental notion of algebraic varieties -- the zero-loci of systems of equations. Meuser \cite{Meuser}, and later Denef \cite{Denef}, proved the rationality of the Poincar\'e series of varieties over the $p$-adic integers, which can be thought of as a form of growth series. In Section \ref{sec:growth} we prove an analogous result for algebraic sets of virtually abelian groups, using a notion of growth appropriate to the setting of finitely generated groups, namely word growth. We will use the notion of a \emph{polyhedral set}, which has its roots in the model theory of Presburger (see Section \ref{sec:prelim} for definitions).

	Word growth in finitely generated groups is a much-studied topic. The growth function counts the number of group elements of length $n$, with respect to the metric arising from a choice of finite generating set. The asymptotics of this function are well understood, but many questions remain about the properties of the corresponding formal power series. For an introduction to the topic, the reader is directed to Mann's book \cite{Mann}.

	Any subset of a group has a growth function, inherited from the group itself.
	This \emph{relative growth} has been studied in many papers, including
	\cite{DO}. The relative growth series of any subgroup of a virtually abelian group
	was shown to be rational in \cite{Evetts}. In Section \ref{sec:growth} we
	consider the relative growth of the algebraic sets of a
	virtually abelian group, as sets of
	tuples of group elements (with an appropriate metric). We show that the growth series of an algebraic set
	is always a rational function, regardless of the choice of finite weighted generating set.

	\begin{theorem_no_number}[\ref{thm:soln_sets_rational}]
		Let $G$ be a virtually abelian group. Then every algebraic set of $G$ has rational weighted growth series with respect to any finite generating set.
	\end{theorem_no_number}

	Moreover, we consider the natural \emph{multivariate} growth series of the algebraic set, and demonstrate how recent results of Bishop imply that this series is holonomic (a class which includes algebraic functions and some transcendental functions).

	\begin{cor_no_number}[\ref{cor:holonomic}]
		Every algebraic set of a virtually abelian group has holonomic weighted multivariate growth series.
	\end{cor_no_number}

	We note that it may be useful for other purposes to have an explicit description of the algebraic sets of the groups in question, since this does not appear cleanly in the proofs. For such a statement, the interested reader is directed to Corollary \ref{rem:algstatement}, where the general structure of algebraic sets is noted, using the terminology of polyhedral sets.

\section{Preliminaries}
	\label{sec:prelim}
In this section we lay out the key definitions and basic results that will be required for the rest of the paper.

%

 \begin{ntn}
	We will write functions to the right of their arguments, with the exception
	of growth functions and the generating functions of growth series.

	We will use \(\mathbb{Z}_{> 0}\) and \(\mathbb{Z}_{\geq 0}\) to denote the
	positive and non-negative integers, respectively.

	If $w\in S^*$ is a word in the generators of some group $G$, we write
	$\overline{w}\in G$ for the group element that the word $w$ represents.
\end{ntn}

\subsection{Polyhedral sets}
	Our fundamental tool for proving that languages of representatives have
	rational growth series in Proposition \ref{prop:EDT0Lrational} and Section
	\ref{sec:growth} will be the theory of \emph{polyhedral sets}. These ideas
	appear in model theory as early as Presburger \cite{Presburger}. Results
	regarding rationality can be found in \cite{Denef}, and the ideas also appear
	in the theory of Igusa local zeta functions (see \cite{Clucketal}). The
	following definitions and results follow Benson's work \cite{Benson}, where it
	is proved that virtually abelian groups have rational (standard) growth
	series. More recently, polyhedral sets have again been used to prove
	rationality of various growth series of groups
	(\cite{DuchinShapiro}, \cite{Evetts}).

	\begin{dfn}\label{def:polyhedral}
		Let $r\in\Z_{>0}$, and let $\cdot$ denote the Euclidean scalar product. Then we define the following.
		\begin{enumerate}
			\item Any subset of $\Z^r$ of the form $\{\mathbf{z}\in\Z^r\mid\mathbf{u}\cdot\mathbf{z}=a\}$, $\{\mathbf{z}\in\Z^r\mid\mathbf{u}\cdot\mathbf{z}>a\}$, or $\{\mathbf{z}\in\Z^r\mid\mathbf{u}\cdot\mathbf{z}\equiv a\mod b\}$, for any $\mathbf{u}\in\Z^r$, $a\in\Z$, $b\in\Z_{>0}$, will be called an \emph{elementary set};
			\item any finite intersection of elementary sets will be called a \emph{basic polyhedral set};
			\item any finite union of basic polyhedral sets will be called a \emph{polyhedral set}.
		\end{enumerate}
		If $\cP\subset\Z^r$ is polyhedral and additionally no element contains negative coordinate entries, we call $\cP$ a \emph{positive polyhedral set}.
	\end{dfn}
	It is not hard to prove the following closure properties.
	\begin{proposition}[Proposition 13.1 and Remark 13.2 of \cite{Benson}]\label{prop:polyclosed}
		Let $\cP, \ \cQ\subseteq\Z^r$ and $\cR\subseteq\Z^s$ be polyhedral sets for some positive integers $r$ and $s$. Then the following are also polyhedral: $\cP\cup\cQ\subseteq\Z^r$, $\cP\cap\cQ\subseteq\Z^r$, $\Z^r\setminus\cP$, $\cP\times\cR\subseteq\Z^{r+s}$.
	\end{proposition}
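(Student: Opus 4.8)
The plan is to verify the four closure properties in turn, working outward from the elementary building blocks. Closure under finite unions is immediate from the definition: a polyhedral set is a finite union of basic polyhedral sets, so $\cP \cup \cQ$ is again such a union. For intersections, I would write $\cP = \bigcup_i B_i$ and $\cQ = \bigcup_j C_j$ with each $B_i, C_j$ basic polyhedral, and apply distributivity of intersection over union to obtain $\cP \cap \cQ = \bigcup_{i,j}(B_i \cap C_j)$; since each $B_i$ and each $C_j$ is a finite intersection of elementary sets, so is $B_i \cap C_j$, hence $B_i \cap C_j$ is basic polyhedral and $\cP \cap \cQ$ is polyhedral.

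The complement is the only case needing a genuine argument. Applying De Morgan's laws twice gives $\Z^r \setminus \cP = \bigcap_i (\Z^r \setminus B_i)$ and $\Z^r \setminus B_i = \bigcup_k (\Z^r \setminus E_{i,k})$, where $B_i = \bigcap_k E_{i,k}$ with each $E_{i,k}$ elementary. Combined with closure under finite intersections and finite unions, already established, it suffices to show that the complement of a single elementary set is polyhedral, and this I would settle by inspecting the three types. The complement of $\{\mathbf{z} \in \Z^r \mid \mathbf{u}\cdot\mathbf{z} = a\}$ is $\{\mathbf{z} \mid \mathbf{u}\cdot\mathbf{z} > a\} \cup \{\mathbf{z} \mid (-\mathbf{u})\cdot\mathbf{z} > -a\}$; the complement of $\{\mathbf{z} \mid \mathbf{u}\cdot\mathbf{z} > a\}$ equals $\{\mathbf{z} \mid (-\mathbf{u})\cdot\mathbf{z} > -a-1\}$, since the coordinates are integers, and is therefore itself elementary; and the complement of $\{\mathbf{z} \mid \mathbf{u}\cdot\mathbf{z} \equiv a \bmod b\}$ is the finite union of the sets $\{\mathbf{z} \mid \mathbf{u}\cdot\mathbf{z} \equiv c \bmod b\}$ taken over the residues $c$ with $0 \le c < b$ and $c \not\equiv a \bmod b$. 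Each of these is plainly polyhedral, so $\Z^r \setminus \cP$ is polyhedral.

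Finally, for the product, reduce once more to basic polyhedral factors via $\cP \times \cR = \bigcup_{i,j}(B_i \times C_j)$. Given $B = \bigcap_k E_k \subseteq \Z^r$ basic polyhedral, each defining condition of $E_k$ --- a relation on a linear form $\mathbf{u}\cdot\mathbf{z}$ with $\mathbf{u} \in \Z^r$ --- pulls back to the same relation on the linear form $(\mathbf{u},\mathbf{0})\cdot(\mathbf{z},\mathbf{w})$ on $\Z^{r+s}$, so $B \times \Z^s$ is a finite intersection of elementary subsets of $\Z^{r+s}$ and hence basic polyhedral; symmetrically $\Z^r \times C$ is basic polyhedral for $C \subseteq \Z^s$ basic polyhedral; and $B \times C = (B \times \Z^s) \cap (\Z^r \times C)$ is then polyhedral by the intersection case. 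The only step with any subtlety is the complement, and even there the sole thing to watch is that, over $\Z$, a strict inequality, the negation of a strict inequality, and the finitely many residue classes modulo $b$ all stay within the permitted vocabulary of elementary sets; I do not anticipate a real obstacle.
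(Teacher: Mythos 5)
Your proof is correct and complete; the paper itself offers no proof, citing Benson's Proposition 13.1 and Remark 13.2 directly, so you have supplied a self-contained verification of what the paper merely invokes. Your treatment of all four closure properties is the standard one — reduce to basic/elementary sets, handle complementation by De Morgan together with a case-by-case check that the complement of each type of elementary set is again polyhedral (using integrality to replace $\leq a$ with $< a+1$), and handle products by padding linear forms with zeros — and it is sound in every detail.
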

	Benson also shows that polyhedral sets behave well under affine transformations, as follows.
	\begin{dfn}
		We call a map $\mathcal{A}\colon\Z^r\to\Z^{s}$ an \emph{integral affine transformation} if there exists an $r\times s$ matrix $M$ with integer entries and some $\mathbf{q}\in\Z^{s}$ such that $\mathbf{p}\cA=\mathbf{p}M+\mathbf{q}$ for $\mathbf{p}\in\Z^r$.
	\end{dfn}
	\begin{proposition}[Propositions 13.7 and 13.8 of \cite{Benson}]\label{prop:polyaffine}
		Let $\cA$ be an integral affine transformation. If $\cP\subseteq\Z^r$ is a polyhedral set then $\cP\cA\subseteq\Z^{s}$ is a polyhedral set. If $\cQ\subseteq\Z^{s}$ is a polyhedral set then the preimage $\cQ\cA^{-1}\subseteq\Z^r$ is a polyhedral set.
	\end{proposition}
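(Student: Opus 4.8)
The plan is to treat the two assertions separately: the claim about preimages is elementary, while the claim about images reduces to one step of quantifier elimination for Presburger arithmetic. For the preimage part, note that taking preimages under any function commutes with finite unions, finite intersections and complements, so it suffices to show that $\cQ\cA^{-1}$ is polyhedral when $\cQ$ is a single elementary set. Writing $\mathbf{x}\cA=\mathbf{x}M+q$ and substituting into a defining relation of $\cQ$, each of $\mathbf{u}\cdot\mathbf{z}=a$, $\mathbf{u}\cdot\mathbf{z}>a$, and $\mathbf{u}\cdot\mathbf{z}\equiv a\bmod b$ pulls back to a relation of the same type, with $\mathbf{u}$ replaced by the vector $\mathbf{v}$ whose entries are $v_i=\sum_j M_{ij}u_j$, with $a$ replaced by $a'=a-\mathbf{u}\cdot q$, and with the modulus $b$ unchanged in the congruence case. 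Hence $\cQ\cA^{-1}$ is again elementary, and the general statement follows at once.

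For the image part, observe first that taking images commutes with finite unions, so it is enough to push forward a basic polyhedral set. I would then reduce to coordinate projections: set $\Gamma=\{(\mathbf{x},\mathbf{y})\in\Z^{r+s}\mid\mathbf{y}=\mathbf{x}M+q\}$, which is the intersection of the $s$ elementary sets $\{y_j-\sum_i M_{ij}x_i=q_j\}$ and hence basic polyhedral in $\Z^{r+s}$. Then $\cP\cA=\{\mathbf{y}\mid\exists\,\mathbf{x}\in\cP,\ \mathbf{y}=\mathbf{x}M+q\}$ is the image of the polyhedral set $(\cP\times\Z^{s})\cap\Gamma$ — polyhedral by Proposition \ref{prop:polyclosed} — under the projection $\Z^{r+s}\to\Z^{s}$ forgetting the first $r$ coordinates. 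Performing that projection one coordinate at a time, it remains to prove: if $\cP\subseteq\Z^{n}$ is polyhedral, then its image $\cP'\subseteq\Z^{n-1}$ under deleting the last coordinate is polyhedral. Commuting once more with unions, we may take $\cP$ basic, defined by finitely many relations $\mathbf{u}_k\cdot\mathbf{z}=a_k$, $\mathbf{v}_l\cdot\mathbf{z}>b_l$, $\mathbf{w}_m\cdot\mathbf{z}\equiv c_m\bmod d_m$.

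This last reduction is the heart of the matter. Let $e$ be a common positive multiple of all the nonzero coefficients of $z_n$ appearing in these relations; multiplying each relation by a suitable integer (and, for a congruence, multiplying its modulus by the same integer, which preserves rather than merely implies the congruence) we may assume the coefficient of $z_n$ in every relation involving $z_n$ is $0$ or $\pm e$. Introducing $z'=e z_n$ then yields relations in which $z'$ occurs with coefficient $+1$ or $-1$, together with the extra congruence $z'\equiv 0\bmod e$; so $\cP'$ consists of those $(z_1,\dots,z_{n-1})$ for which there exists $z'\in\Z$ satisfying some equalities $z'=\alpha$, inequalities $z'>\beta$ or $z'<\gamma$, and congruences $z'\equiv\delta\bmod d$, where $\alpha,\beta,\gamma,\delta$ are affine functions of $z_1,\dots,z_{n-1}$. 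If an equality $z'=\alpha$ is present, substitute it into all remaining relations to obtain a basic polyhedral set directly. Otherwise, combine all the congruence requirements (including $z'\equiv 0\bmod e$) into a single congruence $z'\equiv c\bmod d$ via the Chinese Remainder Theorem, splitting into the finitely many cases according to whether the system is consistent (consistency being itself expressed by congruences on $z_1,\dots,z_{n-1}$, and the inconsistent cases contributing nothing); then rewrite ``there exists $z'$ with every lower bound $<z'<$ every upper bound and $z'\equiv c\bmod d$'', after taking the disjunction over which single lower bound $\beta$ and upper bound $\gamma$ are active, as the finite disjunction $\bigvee_{j=1}^{d}\bigl(\beta+j\equiv c\bmod d\ \wedge\ \beta+j<\gamma\bigr)$, with the obvious simplification when there are no lower bounds or no upper bounds. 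Each disjunct is a conjunction of a congruence and an inequality in $z_1,\dots,z_{n-1}$, so $\cP'$ is polyhedral.

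The main obstacle is precisely this elimination step: the interaction of strict inequalities with congruences forces the Chinese Remainder bookkeeping and the ``search over one period'' disjunction, and care is needed to ensure that scaling the relations to normalise the $z_n$-coefficient preserves each relation exactly — this is why a congruence's modulus is scaled along with it and why the congruence $z'\equiv 0\bmod e$ must be adjoined. Everything else — the Boolean reductions for images and preimages, the graph construction, and the reduction to a single coordinate — is routine given Proposition \ref{prop:polyclosed}.
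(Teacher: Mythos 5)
Your argument is correct, but note that the paper does not prove this proposition at all — it is quoted verbatim from Benson (Propositions 13.7 and 13.8 of \cite{Benson}), so you have supplied a self-contained proof where the authors rely on a citation. Your preimage argument (pull back each elementary set through $\mathbf{x}\mapsto\mathbf{x}M+q$ and use that preimages commute with Boolean operations) is exactly the standard one. For images, your reduction via the graph $\Gamma$ to a single coordinate projection, followed by one step of Presburger quantifier elimination, is sound and is essentially the route Benson's proof and its model-theoretic ancestors take; the normalisation $z'=ez_n$ with the adjoined congruence $z'\equiv 0\bmod e$, the substitution when an equality is present, and the ``smallest admissible value above the active lower bound'' disjunction $\bigvee_{j=1}^{d}(\beta+j\equiv c\bmod d\wedge\beta+j<\gamma)$ are all handled correctly, including the point (easy to get wrong) that scaling a congruence must scale its modulus. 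The one place where your write-up is thinner than it should be is the Chinese Remainder step: since the residues $\delta_m$ are affine in $z_1,\dots,z_{n-1}$, you need to check that on each consistency case the combined residue $c$ can again be taken affine with integer coefficients (it can, via explicit B\'ezout coefficients, but this deserves a sentence). Alternatively you can avoid CRT entirely by keeping the congruences separate and letting $j$ range up to the least common multiple of $e$ and all the $d_m$, replacing $\beta+j\equiv c\bmod d$ by $\bigwedge_m(\beta+j\equiv\delta_m\bmod d_m)$; each conjunct is elementary and the periodicity argument goes through unchanged. With that repair or substitution, the proof is complete.
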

	We note that projection onto any subset of the coordinates of $\Z^r$ is an integral affine transformation.

	\begin{ntn}
		We will now introduce weight functions. When talking about weighted lengths
		of elements of free abelian or virtually abelian groups, we will use \(\|
		\cdot \|\) instead of \(|\cdot |\), which will be used for `standard' length
		of elements.
	\end{ntn}

	Let $\cP\subseteq\Z^r$ be a polyhedral set. Given some choice of
	weight function $\|\mathbf{e}_i\|\in\Z_{>0}$ for the standard basis vectors $\{\mathbf{e}_i\}_{i=1}^r$ of $\mathbb{Z}^r$, we assign the weight $\sum_{i=1}^r a_i\|\mathbf{e}_i\|$ to the element
	$(a_1, \ \ldots, \ a_r)\in \cP$. Define the spherical growth function
	\[\sigma_\cP(n)=\#\{\mathbf{p}\in\cP\mid\|\mathbf{p}\|=n\},\] and the resulting
	weighted growth series
	\[\mbS_\cP(z)=\sum_{n=0}^\infty\sigma_\cP(n)z^n.\] Our argument
	will rely on the following crucial proposition.
	\begin{proposition}[Proposition 14.1 of \cite{Benson}, and Lemma 7.5 of \cite{Denef}]\label{prop:polyhedralrationalgrowth}
		If $\cP$ is a positive polyhedral set, then the weighted growth series $\mbS_\cP(z)$ is a rational function of $z$.
	\end{proposition}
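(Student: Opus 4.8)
The plan is to deduce the proposition from the classical rationality of the generating function enumerating the lattice points of a rational polyhedron, followed by a one-variable specialisation. Throughout, write $\ell(\mathbf{x})=\sum_{i=1}^r x_i\|e_i\|$ for the linear form recording the weight, so that $\mbS_\cP(z)=\sum_{\mathbf{a}\in\cP}z^{\ell(\mathbf{a})}$. Since $\cP\subseteq\Z_{\geq 0}^r$ and each $\|e_i\|\geq 1$, every level set $\{\mathbf{a}\in\cP : \ell(\mathbf{a})=n\}$ is finite, so this is a well-defined element of $\Z[[z]]$; indeed $\mbS_\cP$ is obtained from the multivariate series $F_\cP(x_1,\dots,x_r)=\sum_{\mathbf{a}\in\cP}\mathbf{x}^{\mathbf{a}}\in\Z[[x_1,\dots,x_r]]$ by the substitution $x_i\mapsto z^{\|e_i\|}$.

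First I would reduce to $\cP$ basic polyhedral. Writing $\cP=\cP_1\cup\cdots\cup\cP_m$ with each $\cP_j$ basic polyhedral, inclusion--exclusion gives $\sigma_\cP(n)=\sum_{\emptyset\neq I\subseteq\{1,\dots,m\}}(-1)^{|I|+1}\sigma_{\cP_I}(n)$ with $\cP_I=\bigcap_{j\in I}\cP_j$, and each $\cP_I$ is a finite intersection of elementary sets, hence basic polyhedral, and positive; so $\mbS_\cP$ is a finite $\Z$-linear combination of the $\mbS_{\cP_I}$. Next, working over $\Z$ I would replace each defining condition $\mathbf{u}\cdot\mathbf{z}>a$ by the equivalent $\mathbf{u}\cdot\mathbf{z}\geq a+1$, and note that the congruence conditions $\mathbf{u}_k\cdot\mathbf{z}\equiv a_k\pmod{b_k}$ of $\cP$ depend only on the class of $\mathbf{z}$ modulo the finite-index sublattice $b\Z^r$, where $b=\operatorname{lcm}_k b_k$. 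Hence $\cP=(P\cap\Z^r)\cap S$, where $P\subseteq\mathbb{R}^r$ is a closed rational polyhedron and $S$ a finite union of cosets of $b\Z^r$; and since $\cP\subseteq\Z_{\geq 0}^r$ we may replace $P$ by $P\cap\mathbb{R}_{\geq 0}^r$ (still a closed rational polyhedron) without changing $\cP$, so the recession cone of $P$ lies in the non-negative orthant.

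Now I would compute $F_\cP$. Splitting over the cosets in $S$ and substituting $\mathbf{z}=\mathbf{c}+b\mathbf{w}$ for each coset $\mathbf{c}+b\Z^r\subseteq S$ exhibits $F_\cP(\mathbf{x})$ as a finite sum of terms $\mathbf{x}^{\mathbf{c}}\,G_{\mathbf{c}}\bigl(x_1^{b},\dots,x_r^{b}\bigr)$, where $G_{\mathbf{c}}(\mathbf{y})=\sum_{\mathbf{w}\in P_{\mathbf{c}}\cap\Z^r}\mathbf{y}^{\mathbf{w}}$ is the lattice-point generating function of the rational polyhedron $P_{\mathbf{c}}=(P-\mathbf{c})/b$. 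By the classical theorem on lattice points of rational polyhedra (Stanley; equivalently Brion's decomposition into simplicial cones), each $G_{\mathbf{c}}$ is a rational function whose denominator is a product of factors $1-\mathbf{y}^{\mathbf{v}}$, with $\mathbf{v}$ running over the primitive generators of the finitely many simplicial cones in a triangulation of the recession cone of $P_{\mathbf{c}}$; by the previous paragraph these $\mathbf{v}$ are non-zero and lie in the non-negative orthant. Hence $F_\cP\in\Q(x_1,\dots,x_r)$, with a denominator that is a product of factors $1-\mathbf{x}^{b\mathbf{v}}$. Finally, the substitution $x_i\mapsto z^{\|e_i\|}$ — legitimate on formal power series by the finiteness noted at the outset — sends each such factor to $1-z^{b\,\ell(\mathbf{v})}$, which is non-zero in $\Q(z)$ since $\ell(\mathbf{v})\geq 1$; thus $\mbS_\cP(z)$ is the image of a rational function under a substitution that does not annihilate its denominator, and so lies in $\Q(z)$.

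The main obstacle is the polyhedral ingredient invoked in the third paragraph: proving that the lattice-point generating function of a rational polyhedron is rational, with sufficient control on its denominator to guarantee that the final one-variable specialisation creates no pole. An alternative sidestepping the cone combinatorics is to observe that $\cP$ and $\ell$ are definable in Presburger arithmetic, so $n\mapsto\sigma_\cP(n)$ is a Presburger counting function, and such functions are eventually quasi-polynomial and hence have rational generating functions; packaging the requisite quantifier elimination cleanly is then the delicate point, and this is in essence the route of Denef's Lemma 7.5.
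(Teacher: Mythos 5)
The paper does not prove this proposition; it is cited verbatim from Benson (Proposition 14.1) and Denef (Lemma 7.5), so there is no in-paper proof to compare against. Your reconstruction is correct: the inclusion--exclusion reduction to basic polyhedral sets is sound (each $\cP_I$ is automatically positive since $\cP_j\subseteq\cP\subseteq\Z_{\geq0}^r$), the splitting over congruence cosets and the substitution $\mathbf{z}=\mathbf{c}+b\mathbf{w}$ are handled correctly, intersecting with $\mathbb{R}_{\geq0}^r$ ensures the recession cone is pointed, and your check that the univariate specialisation $x_i\mapsto z^{\|e_i\|}$ does not annihilate the denominator is exactly the point that has to be verified for the final step to be legitimate. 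The route you take --- via the Stanley/Brion theory of lattice-point generating functions of rational polyhedra --- is a clean, modern one, but it imports heavier machinery than Benson's original Section~14 argument, which develops the rationality from scratch by an elementary recursion on the defining conditions rather than by triangulating a recession cone; your closing remark about the Presburger/quasi-polynomial route is, as you say, essentially the approach of Denef's Lemma~7.5. The only place a careful reader would want more is a precise statement and reference for the cone theorem you invoke in your third paragraph: that for a rational polyhedron whose recession cone lies in the non-negative orthant, the multivariate lattice-point series is rational with denominator a product of factors $1-\mathbf{x}^{\mathbf{v}}$ for finitely many nonzero $\mathbf{v}\in\Z_{\geq0}^r$. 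That claim is standard, but without it being pinned down the final assertion that the specialisation creates no pole is carrying weight it has not formally been given.
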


	We will need the following more general result.
	\begin{cor}\label{cor:rationalpoly}
		Let $\cP\subset\Z^r$ be any polyhedral set (not necessarily positive). Then the weighted growth series $\mbS_\cP(z)$ is a rational function of $z$.
	\end{cor}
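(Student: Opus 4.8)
The plan is to reduce the case of a general polyhedral set $\cP \subset \Z^r$ to the positive case already handled in Proposition \ref{prop:polyhedralrationalgrowth}. The obstacle is that the weight of a point $(a_1,\ldots,a_r)$ was defined as $\sum a_i\|e_i\|$, which only makes sense (as a length) when the coordinates are non-negative; for a general polyhedral set we must first decide what $\mbS_\cP(z)$ even means, and the natural reading is that we still weight each coordinate by $|a_i|\cdot\|e_i\|$, i.e. we take the $\ell^1$-type weighted norm. So the first step is to record that interpretation, and then to split $\cP$ according to the sign pattern of the coordinates.

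First I would partition $\Z^r$ into the $2^r$ orthants $O_\epsilon = \{\mathbf{z}\in\Z^r \mid \epsilon_i z_i \geq 0 \text{ for all } i\}$, indexed by sign vectors $\epsilon\in\{+1,-1\}^r$. Each $O_\epsilon$ is a basic polyhedral set (an intersection of $r$ elementary half-space sets $\{z_i \geq 0\}$ or $\{z_i \leq 0\}$, the latter written as $\{-z_i \geq 0\}$, together with equalities $z_i = 0$ on the boundary coordinates if one wants the orthants to be disjoint — alternatively one can just allow overlaps on coordinate hyperplanes and correct by inclusion–exclusion, but it is cleaner to fix a convention making them disjoint). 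By Proposition \ref{prop:polyclosed}, $\cP_\epsilon := \cP \cap O_\epsilon$ is polyhedral for each $\epsilon$, and $\cP = \bigsqcup_\epsilon \cP_\epsilon$ as a disjoint union, whence $\sigma_\cP(n) = \sum_\epsilon \sigma_{\cP_\epsilon}(n)$ and $\mbS_\cP(z) = \sum_\epsilon \mbS_{\cP_\epsilon}(z)$. Since a finite sum of rational functions is rational, it suffices to prove each $\mbS_{\cP_\epsilon}(z)$ is rational.

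Next, fix $\epsilon$ and let $\tau_\epsilon \colon \Z^r \to \Z^r$ be the coordinate-wise sign flip $\mathbf{z} \mapsto (\epsilon_1 z_1,\ldots,\epsilon_r z_r)$. This is an integral affine transformation (diagonal matrix with entries $\pm 1$, zero translation) and is its own inverse, so by Proposition \ref{prop:polyaffine} the image $\cP_\epsilon\tau_\epsilon$ is polyhedral; moreover every point of $\cP_\epsilon$ lies in the orthant $O_\epsilon$, so every point of $\cP_\epsilon\tau_\epsilon$ has all coordinates $\geq 0$, i.e. $\cP_\epsilon\tau_\epsilon$ is a \emph{positive} polyhedral set. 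Crucially, $\tau_\epsilon$ preserves the weighted norm: the weight we assigned to $(a_1,\ldots,a_r)\in\cP_\epsilon$ is $\sum_i |a_i|\|e_i\| = \sum_i |\epsilon_i a_i|\|e_i\|$, which is exactly the weight of its image under $\tau_\epsilon$ in the positive polyhedral set. Hence $\tau_\epsilon$ restricts to a weight-preserving bijection $\cP_\epsilon \to \cP_\epsilon\tau_\epsilon$, so $\sigma_{\cP_\epsilon}(n) = \sigma_{\cP_\epsilon\tau_\epsilon}(n)$ for all $n$ and therefore $\mbS_{\cP_\epsilon}(z) = \mbS_{\cP_\epsilon\tau_\epsilon}(z)$. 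By Proposition \ref{prop:polyhedralrationalgrowth} the right-hand side is rational, so each summand is rational and thus $\mbS_\cP(z)$ is rational.

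The only genuinely delicate point is bookkeeping on the coordinate hyperplanes: a point with some $a_i = 0$ sits in several closed orthants, so one must either use half-open orthants (e.g. declare $O_\epsilon$ to require $z_i > 0$ when $\epsilon_i = +1$ and $z_i \leq 0$ when $\epsilon_i = -1$, which are still polyhedral since strict inequalities are elementary sets) to get an honest partition, or apply inclusion–exclusion over the faces; either way the finiteness of the number of pieces and the closure properties of Proposition \ref{prop:polyclosed} keep everything polyhedral, and the weight-preservation argument is unaffected because flipping the sign of a zero coordinate does nothing. I expect this to be the main (minor) obstacle; the rest is a direct application of the two cited propositions.
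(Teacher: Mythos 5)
Your proof is correct and follows essentially the same route as the paper's: partition $\cP$ by orthant, reflect each piece to a positive polyhedral set via a weight-preserving integral affine map, and sum the resulting rational series. The only cosmetic difference is in enforcing disjointness of the orthant pieces — you propose half-open orthants or inclusion--exclusion, while the paper sets $\cP_j = \bigl(\cP\setminus\bigcup_{k<j}\cP_k\bigr)\cap\cQ_j$ inductively; both devices work and rely on the same closure properties.
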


	\begin{proof}
		We show that $\cP$ may be expressed as a disjoint union of polyhedral sets,
		each in weight-preserving bijection with a positive polyhedral set. Let
		$\cQ_1=\{\mathbf{z}\in\Z^r\mid \mathbf{z}\cdot \mathbf{e}_i\geq0,~1\leq i\leq
		r\}=\bigcap_{i=1}^r\{\mathbf{z}\in\Z^r\mid \mathbf{z}\cdot \mathbf{e}_i\geq 0\}$
		denote the non-negative orthant of $\Z^r$, and note that it is polyhedral.
		Let $\cQ_2, \ \ldots, \ \cQ_{2^r}$ denote the remaining orthants (in any
		order) obtained from $\cQ_1$ by (compositions of) reflections along
		hyperplanes perpendicular to the axes and passing through the origin. By
		Proposition \ref{prop:polyaffine} these are also polyhedral sets. Let
		$\cP_1=\cP\cap\cQ_1$ and for each $2\leq j\leq 2^r$, inductively define
		\[\cP_j = \left(\cP\setminus\bigcup_{k<j}\cP_k\right) \cap \cQ_j.\] Each
		$\cP_j$ is a polyhedral set by Proposition \ref{prop:polyclosed}, and we
		have a disjoint union $\cP=\bigcup_{j=1}^{2^r}\cP_j$. Each $\cP_j$ is in
		weight-preserving bijection with a \emph{positive} polyhedral set (by
		compositions of reflections along hyperplanes) and so $\mbS_{\cP_j}(z)$ is
		rational. The result follows since
		$\mbS_{\cP}(z)=\sum_{j=1}^{2^r}\mbS_{\cP_j}(z)$.
	\end{proof}

\subsection{Equations}

  \begin{dfn}
    Let \(G\) be a group. A \textit{finite system of equations} in \(G\) is a
    finite subset \(\mathcal E\) of \(G \ast F_V\), where \(F_V\) is the free
    group on a finite set \(V\). If \(\mathcal E = \{w_1, \ \ldots, \ w_n\}\),
    we denote the system \(\mathcal E\) by \(w_1 = w_2 = \cdots = w_n = 1\).
    Elements of \(V\) are called \textit{variables}. A \textit{solution} to a
    system \(w_1 = \cdots = w_n = 1\) is a homomorphism \(\phi \colon F_V \to
    G\), such that \(w_1 \bar{\phi} = \cdots = w_n \bar{\phi} = 1_G\), where
    \(\bar{\phi}\) is the extension of \(\phi\) to a homomorphism from \(G \ast
    F_V \to G\), defined by \(g \bar{\phi} = g\) for all \(g \in G\).


    A \textit{finite system of twisted equations} in \(G\) is a finite subset
    \(\mathcal E\) of \(G \ast (F_V \times \Aut(G))\), and is again denoted
    \(w_1 = \cdots = w_n = 1\). Elements of \(V\) are
    called \textit{variables}. Define the function
    \begin{align*}
      p \colon G \times \Aut(G) & \to G \\
      (g, \ \psi) & \mapsto g \psi.
    \end{align*}
    If \(\phi \colon F_V \to G\) is a homomorphism, let \(\tilde{\phi}\) denote
    the homomorphism from \(G \ast (F_V \times \Aut(G))\) to \(G \times \Aut(G)\),
    defined by \((h, \ \psi) \tilde{\phi} = (h \phi, \ \psi)\) for \((h, \ \psi)
    \in F_V \times \Aut(G)\) and \(g \tilde{\phi} = g\) for all \(g \in G\). A
    \textit{solution} is a homomorphism \(\phi \colon F_V \to G\), such that
    \(w_1 \tilde{\phi} p = \cdots = w_n \tilde{\phi} p = 1_G\).

		For the purposes of decidability, in finitely generated groups, the elements
		of \(G\) will be represented as words over a finite generating set, and in
		twisted equations, automorphisms will be represented by their action on the
		generators.
  \end{dfn}

	\begin{rmk}
		We will often display a solution to an equation with variables \(X_1, \
		\ldots, \ X_n\) as a tuple \((x_1, \ \ldots, \ x_n)\) of group elements,
		rather than a homomorphism. We can obtain the homomorphism from the tuple by
		defining \(X_i \mapsto x_i\) for each \(i\).
	\end{rmk}

	\begin{dfn}
		The set of solutions to a finite system of equations in $n$ variables, expressed as a subset of $G^n$, is called an \emph{algebraic set} of $G$.
	\end{dfn}
\subsection{Multivariable finite-state automata}

	Since solutions to equations can be thought of as tuples, one method that can
	be used to study the language complexity of sets of solutions is using
	multivariable languages, which are sets of tuples of words over an alphabet.
	We start with the formal definition.

	\begin{dfn}\label{def:mFSA}
		Let \(\Sigma\) be an alphabet, and \(n \in \mathbb{Z}_{>0}\). An
		\textit{\(n\)-variable word} over \(\Sigma\) is an element of the Cartesian
		product \((\Sigma^\ast)^n\), and an \textit{\(n\)-variable language} over
		\(\Sigma\) is any subset of \((\Sigma^\ast)^n\).
	\end{dfn}

	We continue with a generalisation of a finite-state automaton to accept
	\(n\)-variable languages, for some positive integer \(n\):
	the (asynchronous, non-deterministic) \(n\)-variable finite-state automaton.

	\begin{dfn}
		Let \(n \in \mathbb{Z}_{>0}\). An \textit{\(n\)-variable finite-state
		automaton} is a tuple \(\mathcal{A} = (\Sigma, \ \Gamma, \ q_0, \ F)\), where
		\begin{enumerate}
			\item \(\Sigma\) is an alphabet;
			\item \(\Gamma\) is a finite edge-labelled graph, where labels are
			\(n\)-variable words in \((\Sigma^\ast)^n\), with at most one non-empty
			word entry. The vertices of \(\Gamma\)
			are called \textit{states};
			\item \(q_0 \in V(\Gamma)\) is called the \textit{start state};
			\item \(F \subseteq V(\Gamma)\) is called the set of \textit{accept
			states}.
		\end{enumerate}
		When tracing a path in \(\Gamma\), we trace an \(n\)-variable word to be the
		concatenation of the labels of each edge traversed. Since each edge has at
		most one non-empty entry, the word will only get longer in one coordinate at
		a time. An \(n\)-variable word \(\mathbf{w} \in (\Sigma^\ast)^n\) is
		\textit{accepted} by \(\mathcal{A}\) if there is a path \(\gamma\) in
		\(\Gamma\) from \(q_0\) to a state in \(F\), such that the \(n\)-variable
		word obtained by reading the labels in \(\gamma\) is \(\mathbf{w}\). The
		\textit{language accepted} by \(\mathcal{A}\) is the set of all
		\(n\)-variable words accepted by \(\mathcal{A}\).
	\end{dfn}

	We give an example of a language accepted by a \(3\)-variable finite-state
	automaton. In this case, the language represents the set of solutions to a
	system of equations in \(\mathbb{Z}\).

	\begin{ex}
		\label{exp_lin_n_reg_ex}
		Let \(\mathcal{E}\) be the following system of equations in \(\mathbb{Z}\)
		(using additive notation):
		\begin{align*}
			& X - Y + Z = 1
			& - Y + Z = 0.
		\end{align*}
		Note that by subtracting the second equation from the first, it is not
		difficult to show that the set of solutions to this system is \(\{(1, \ y, \
		y) \mid y \in \mathbb{Z}\}\). To demonstrate a more general method we will
		use later on, we will construct the set of solutions, and show that \(L =
		\{(a^x, \ a^y, \ a^z) \mid (x, \ y, \ z) \text{ is a solution to }
		\mathcal{E}\}\) is accepted by a \(3\)-variable finite-state automaton over
		the alphabet \(\{a, \ a^{-1}\}\), using a different method. We will show
		\begin{enumerate}
			\item The language \(\{(a^x, \ a^y, \ a^z) \mid (x, \ y, \ z) \text{ is
			a solution to } \mathcal{E} \text{ and } x, \ y, \ z \geq 0\}\) is accepted
			by a \(3\)-variable finite-state automaton;
			\item To show \(L\) is accepted by a \(3\)-variable finite-state
			automaton, we take the finite union across the possible configurations of
			signs of \(X\), \(Y\) and \(Z\) and use the fact that finite unions of
			languages accepted by \(3\)-variable finite-state automata are also
			accepted by \(3\)-variable finite-state automata.
		\end{enumerate}
		To show (1), consider the \(3\)-variable
		finite-state automaton in Figure 1.
		\begin{figure}
			\caption{The start state is \(q_{(0, \ 0)}\), and
			\(q_{(1, \ 0)}\) is the unique accept state.}
			\begin{tikzpicture}
				[scale=.7, auto=left,every node/.style={circle}]
				\tikzset{
				on each segment/.style={
					decorate,
					decoration={
						show path construction,
						moveto code={},
						lineto code={
							\path [#1]
							(\tikzinputsegmentfirst) -- (\tikzinputsegmentlast);
						},
						curveto code={
							\path [#1] (\tikzinputsegmentfirst)
							.. controls
							(\tikzinputsegmentsupporta) and (\tikzinputsegmentsupportb)
							..
							(\tikzinputsegmentlast);
						},
						closepath code={
							\path [#1]
							(\tikzinputsegmentfirst) -- (\tikzinputsegmentlast);
						},
					},
				},
				mid arrow/.style={postaction={decorate,decoration={
							markings,
							mark=at position .5 with {\arrow[#1]{stealth}}
						}}},
			}

				\node[draw, minimum size=1.8cm] (m1m1) at (-5, -5) {\(q_{(-1, \ -1)}\)};
				\node[draw, minimum size=1.8cm] (m10) at (-5, 0) {\(q_{(-1, \ 0)}\)};
				\node[draw, minimum size=1.8cm] (m11) at (-5, 5) {\(q_{(-1, \ 1)}\)};
				\node[draw, minimum size=1.8cm] (0m1) at (0, -5) {\(q_{(0, \ -1)}\)};
				\node[draw, minimum size=1.8cm] (00) at (0, 0)  {\(q_{(0, \ 0)}\)};
				\node[draw, minimum size=1.8cm] (01) at (0, 5) {\(q_{(0, \ 1)}\)};
				\node[draw, minimum size=1.8cm] (1m1) at (5, -5) {\(q_{(1, \ - 1)}\)};
				\node[draw, minimum size=1.8cm, double] (10) at (5, 0)
				{\(q_{(1, \ 0)}\)};
				\node[draw, minimum size=1.8cm] (11) at (5, 5) {\(q_{(1, \ 1)}\)};

				\draw[postaction={on each segment={mid arrow}}] (m1m1) to (0m1);
				\draw[postaction={on each segment={mid arrow}}] (m10) to (00);
				\draw[postaction={on each segment={mid arrow}}] (m11) to (01);
				\draw[postaction={on each segment={mid arrow}}] (0m1) to (1m1);
				\draw[postaction={on each segment={mid arrow}}] (00) to (10);
				\draw[postaction={on each segment={mid arrow}}] (01) to (11);

				\draw[postaction={on each segment={mid arrow}}, out=240, in=30,
				distance=2cm] (00) to (m1m1);
				\draw[postaction={on each segment={mid arrow}}, out=240, in=30,
				distance=2cm] (01) to (m10);
				\draw[postaction={on each segment={mid arrow}}, out=240, in=30,
				distance=2cm] (10) to (0m1);
				\draw[postaction={on each segment={mid arrow}}, out=240, in=30,
				distance=2cm] (11) to (00);

				\draw[postaction={on each segment={mid arrow}}, out=60, in=210,
				distance=2cm] (m1m1) to (00);
				\draw[postaction={on each segment={mid arrow}}, out=60, in=210,
				distance=2cm] (m10) to (01);
				\draw[postaction={on each segment={mid arrow}}, out=60, in=210,
				distance=2cm] (0m1) to (10);
				\draw[postaction={on each segment={mid arrow}}, out=60, in=210,
				distance=2cm] (00) to (11);

				\node at (-2.5, 5.4) {\((a, \ \varepsilon, \ \varepsilon)\)};
				\node at (2.5, 5.4) {\((a, \ \varepsilon, \ \varepsilon)\)};
				\node at (-2.5, 0.4) {\((a, \ \varepsilon, \ \varepsilon)\)};
				\node at (2.5, 0.4) {\((a, \ \varepsilon, \ \varepsilon)\)};
				\node at (-2.5, -4.6) {\((a, \ \varepsilon, \ \varepsilon)\)};
				\node at (2.5, -4.6) {\((a, \ \varepsilon, \ \varepsilon)\)};

				\node at (-0.7, 2) {\((\varepsilon, \ a, \ \varepsilon)\)};
				\node at (4.3, 2) {\((\varepsilon, \ a, \ \varepsilon)\)};
				\node at (-0.7, -3) {\((\varepsilon, \ a, \ \varepsilon)\)};
				\node at (4.3, -3) {\((\varepsilon, \ a, \ \varepsilon)\)};

				\node at (-4.3, 3) {\((\varepsilon, \ \varepsilon, \ a)\)};
				\node at (0.7, 3) {\((\varepsilon, \ \varepsilon, \ a)\)};
				\node at (-4.3, -2) {\((\varepsilon, \ \varepsilon, \ a)\)};
				\node at (0.7, -2) {\((\varepsilon, \ \varepsilon, \ a)\)};

			\end{tikzpicture}
		\end{figure}
		This finite-state automaton works as follows:
		\begin{enumerate}
			\item Traversing an edge labelled by \((a, \ \varepsilon, \
			\varepsilon)\), \((\varepsilon, \ a, \ \varepsilon)\) or \((\varepsilon, \
			\varepsilon, \ a)\) corresponds to increasing \(x\), \(y\) or \(z\) by
			\(1\), respectively. The states \(q_{(i, \ j)}\) correspond to the value
			of \((x - y + z, \ -y + z)\), with the current values of \(x\), \(y\) and
			\(z\).
			\item Once we have increased \(x\), \(y\) and \(z\) to the desired values,
			if this is a solution to \(\mathcal{E}\), then we must be in the accept
			state \(q_{(1, \ 0)}\).
			\item Note that we cannot increase the \(x\)s, \(y\)s and \(z\)s in any
			order, otherwise we would need an unbounded size of FSA. For example, the
			element \((a, \ a^l, \ a^l)\) of \(L\), where \(l \in \mathbb{Z}\) and \(l
			> 1\) cannot be reached in the above system by traversing one
			\((a, \ \varepsilon, \ \varepsilon)\) edge, then \(l\) \((\varepsilon, \
			a, \ \varepsilon)\) edges, and then \(l\) \((\varepsilon, \ \varepsilon, \
			a)\) edges, as after the \(l\) \((\varepsilon, \ a, \ \varepsilon)\) edges
			we would need a state \(q_{(-l + 1, \ -l)}\), which does not lie in the
			finite-state automaton. Moreover, we cannot add them to the finite-state
			automaton, as there are infinitely many such states. We prove the
			existence of an ordering of the edges (up to considering two edges with the
			same label equivalent) that works in Lemma \ref{eqn_bound_K_lem}. In this
			specific case, it is not hard to show that the ordering that starts with
			\((a, \ \varepsilon, \ \varepsilon)\), followed by \(l\) traversals of a
			path comprising one \((\varepsilon, \ a, \ \varepsilon)\) edge and one
			\((\varepsilon, \ \varepsilon, \ a)\) edge, for all \(l > 0\), and a
			similar `reversed' order would work if \(l < 0\).
			\item Note that not all states may be necessary, but it is simpler to construct them all.
		\end{enumerate}
	\end{ex}

\subsection{EDT0L languages}

	The ultimate aim of using the \(n\)-variable finite-state automata is in order
	to show that the set of solutions to a system of equations in a virtually
	abelian group can be described as an EDT0L language, thus adding an additional
	class of groups to many classes of groups already known to have this property.
	We start by defining an EDT0L system; a grammar that generates an EDT0L
	language.

  \begin{dfn}
    An \textit{EDT0L system} is a tuple \(\mathcal H = (\Sigma, \ C, \ w, \
    \mathcal{R})\), where
    \begin{enumerate}
      \item \(\Sigma\) is an alphabet, called the \textit{(terminal) alphabet};
      \item \(C\) is a finite superset of \(\Sigma\), called the
      \textit{extended alphabet} of \(\mathcal H\);
      \item \(w \in C^\ast\) is called the \textit{start word};
      \item \(\mathcal{R}\) is a regular (as a language) subset of
      \(\End(C^\ast)\), called the \textit{rational control} of \(\mathcal H\).
    \end{enumerate}
    The language \textit{accepted} by \(\mathcal H\) is
    \[
      L(\mathcal H) = \{w \phi \mid \phi \in \mathcal{R}\}.
    \]
    A language accepted by an EDT0L system is called an \textit{EDT0L
    language}.
  \end{dfn}

	There are a number of different definitions of an EDT0L system, that all
	generate the same class of languages. In \cite{eqns_free_grps} and
	\cite{appl_L_systems_GT}, the definition is the same as given here, except for
	the insistence that the start word is a single letter. To show that these are
	equivalent, adding a single homomorphism preconcatenated to the rational
	control of an EDT0L system (as defined here) that maps a new letter \(\perp\)
	to the start word, and then defining the new start symbol to be \(\perp\)
	gives that any EDT0L language is accepted by an EDT0L system with a single
	letter as the start word. In \cite{math_theory_L_systems}, and many earlier
	publications, the definition is what is given above, except they only allow
	rational controls of the form \(\Delta^\ast\), for some finite set of
	endomorphisms \(\Delta\). This definition is again equivalent to the
	definition we have given \cite{Asveld}, but proves to be cumbersome when
	proving languages are EDT0L.

	To streamline the definition of specific EDT0L systems, we introduce the
	following notation convention for specifing endomorphisms of a given
	free monoid.

	\begin{ntn}
		When defining endomorphisms of \(C^\ast\) for some extended alphabet \(C\),
		within the definition of an EDT0L system, we will usually define each
		endomorphism by where it maps each letter in \(C\). If any letter is not
		assigned an image within the definition of an endomorphism, we will say that
		it is fixed by that endomorphism.
	\end{ntn}

	\subsection{Space complexity}

		We give a brief definition of space complexity. We refer the reader to
		\cite{computational_compl} for a comprehensive introduction to space
		complexity, or to \cite{eqns_free_grps} for the consideration of space
		complexity when constructing EDT0L systems.

		\begin{dfn}
			Let \(f \colon \mathbb{Z}_{\geq 0} \to \mathbb{Z}_{\geq 0}\) be a function.
			We say that an algorithm runs in \(\mathsf{NSPACE}(f)\) if it can be
			performed by a non-deterministic Turing machine with the following:
			\begin{enumerate}
				\item A read-only input tape;
				\item A write-only output tape;
				\item A read-write work tape that has a length of at most
				\(\mathcal{O}(nf)\) for an input of length \(n\).
			\end{enumerate}

			An algorithm is said to run in \textit{non-deterministic quadratic space} if
			it runs in \(\mathsf{NSPACE}(f)\), for some quadratic function \(f \colon
			\mathbb{Z}_{\geq 0} \to \mathbb{Z}_{\geq 0}\).
		\end{dfn}

		We will use this definition to show that we can construct the multivariable
		finite-state automaton from Theorem \ref{theorem:VA_n_reg}, and hence the
		EDT0L system from Corollary \ref{VA_EDT0L_cor}, in non-deterministic
		quadratic space. Recall that a multivariable finite-state automaton has a
		set of vertices, edges, an assignment of labels to edges, a specified start
		state, and a set of accept states that all must be constructed.

		We will later need the following lemma that allows us to take finite
		unions of languages that are accepted by multivariable finite-state
		automata without changing the space complexity.

		\begin{lem}
			\label{finite_union_n_reg_space_complexity_lem}
			Let \(f \colon \mathbb{Z}_{\geq 0} \to \mathbb{Z}_{\geq 0}\) be a
			function. A finite union of languages accepted by multivariable
			finite-state automata that are all constructible in \(\mathsf{NSPACE}(f)\)
			is also accepted by a multivariable finite-state automaton that is
			constructible in \(\mathsf{NSPACE}(f)\).
		\end{lem}

		\begin{proof}
			The automaton \(\mathcal{M}\) we use is the automaton obtained by taking
			the union of all of the automata of the languages in the union, and
			collapsing the start states to a single state, which will be the start
			state. All accept states will remain accept states. We can construct
			\(\mathcal{M}\) by constructing each of the automata in the union one at
			a time, which can be done in \(\mathsf{NSPACE}(f)\).
		\end{proof}

\subsection{Languages of solutions to equations}

	We now define the languages that we will be studying, which are derived
	from the set of solutions. We need to choose a finite generating set and
	normal form in order to do this, although any can work. We consider
	two methods: one can either look at the language of
	\(n\)-variable words representing solutions, or one can look at the language
	of words that comprise the solutions concatenated with one another, delimited
	by an additional letter \(\#\).

	\begin{dfn}\label{def:sollanguages}
		Let \(G\) be a finitely generated group, with a finite monoid generating set
		\(\Sigma\), and a normal form \(\eta \colon G \to \Sigma^\ast\). Let
		\(\mathcal{E}\) be a system of (twisted) equations in \(G\), and let \(n\)
		be the number of variables in \(\mathcal{E}\). Let \(V =\{X_1, \ \ldots, \
		X_n\}\) be the set of variables, and let \(\mathcal{S}\) be the set of
		solutions, which are homomorphisms from \(F_V \ast G\) to \(G\).

		The \textit{multivariable solution language} to \(\mathcal{E}\) with
		respect to \(\Sigma\) and \(\eta\), is defined to be
		\[
			\{(X_1 \psi \eta, \ X_2 \psi \eta, \ \ldots, \ X_n \psi \eta) \mid \psi
			\in \mathcal{S}\}\subset \Sigma^*\times\Sigma^*\times\cdots\times\Sigma^*.
		\]
		The \textit{\(\#\)-joined solution language} to \(\mathcal{E}\) with respect
		to \(\Sigma\) and \(\eta\), is defined to be
		\[
			\{X_1 \psi \eta \# X_2 \psi \eta \# \cdots X_n \psi \eta \mid \psi \in
			\mathcal{S}\}\subset (\Sigma\cup\{\#\})^*.
		\]
	\end{dfn}

	We now show that a multivariable solution language being accepted by an
	\(n\)-variable finite-state automaton is sufficient for the corresponding \(\#\)-joined solution language to be EDT0L.

	\begin{lem}
		\label{n_reg_implies_EDT0L_lem}
		Let \(L\) be an \(n\)-variable language over an alphabet \(\Sigma\) (where \(n \in
		\mathbb{Z}_{>0}\)), that is
		accepted by an \(n\)-variable finite-state automaton, constructible in \(\mathsf{NSPACE}(f)\),
		for some \(f \colon \mathbb{Z}_{\geq 0}
		\to \mathbb{Z}_{\geq 0}\). Then
		\begin{enumerate}
			\item The language \(M = \{w_1 \# \cdots \# w_n \mid (w_1, \ \ldots, \ w_n) \in L\}\) is an
			EDT0L language over \(\Sigma \sqcup \{\#\}\);
			\item An EDT0L system for \(M\) can be
			constructed in \(\mathsf{NSPACE}(f)\).
		\end{enumerate}
	\end{lem}

	\begin{proof}
		We will construct an EDT0L system \(\mathcal{H}\) for \(M\) as follows. The
		terminal alphabet will be \(\Sigma \cup \{\#\}\), the extended alphabet will
		be \(C = \Sigma \cup \{\#, \ \perp_1, \ \ldots, \ \perp_n\}\), and the start
		word will be \(\perp_1 \# \cdots \# \perp_n\).

		Let \(\mathcal{A} = (\Sigma, \ \Gamma, \ q_0, \ F)\) be an \(n\)-variable
		finite-state automaton that accepts \(L\). We will use \(\mathcal{A}\) to
		define the rational control of \(\mathcal{H}\). Let \(W\) be the set of all
		\(n\)-variable words that appear as edge labels within \(\Gamma\). For each
		\(\mathbf{w} = (w_1, \ \ldots, \ w_n) \in W\), define \(\varphi_\mathbf{w}
		\in \End(C^\ast)\) by
		\begin{align*}
			\perp_1 \varphi_\mathbf{w} & = w_1 \perp_1 \\
			& \vdots \\
			\perp_n \varphi_\mathbf{w} & = w_n \perp_n.
		\end{align*}
		Also define \(\psi \in \End(C^\ast)\)
		\[
			\perp_i \psi = \varepsilon,
		\]
		for all \(i \in \{1, \ \ldots, \ n\}\). Our rational control \(\mathcal{R}\)
		will be a regular language over the set \(\{\varphi_\mathbf{w} \mid
		\mathbf{w} \in W\}\). Let \(\Gamma'\) be the edge-labelled graph obtained
		from \(\Gamma\), by replacing the label \(\mathbf{w}\) on each edge with
		\(\varphi_\mathbf{w}\).

		Consider the (\(1\)-variable) finite-state automaton \(\mathcal{B} =
		(\Sigma, \ \Gamma', \ q_0, \ F)\). Let \(K\) be the language accepted by
		\(\mathcal{B}\). We have that \(K\) is precisely the set of all
		endomorphisms \(\theta\) of \(C^\ast\) that can be written as products of
		endomorphisms \(\varphi_\mathbf{w}\), for \(\mathbf{w} \in W\), such that
		\((\perp_1 \# \cdots \# \perp_n) \theta = u_1 \perp_1 \# \cdots \# u_n
		\perp_n\), for some \((u_1, \ \ldots, \ u_n) \in L\). Therefore, the regular
		language \(K \psi\) is the set of all endomorphisms that map \(\perp_1 \#
		\cdots \# \perp_n\) to an element of \(M\), and so taking \(\mathcal{R} = K
		\psi\) gives the desired EDT0L system.

		For (2), since a multivariable finite state automaton contains an alphabet
		\(\Sigma\), this can be obtained and output in \(\mathsf{NSPACE}(f)\), and
		thus the alphabet for the EDT0L language, \(\Sigma \cup \{\#\}\), and the
		extended alphabet \(C = \Sigma \cup \{\#, \ \perp_1, \ \ldots, \ \perp_n\}\)
		can also be constructed and written to the output tape in
		\(\mathsf{NSPACE}(f)\). The start word will always be \(\perp_1 \# \cdots \#
		\perp_n\), regardless of the input, and we can just output this.

		It remains to construct the rational control. As in the construction of
		\(\mathcal{H}\), we use the same set of vertices and edges, but whenever the
		rational control in \(\mathcal{H}\) labels an edge using
		\(\varphi_\mathbf{w}\), we instead label it using \(\mathbf{w}\), and note
		that \(\varphi_{\mathbf{w}}\) can be effectively computed from
		\(\mathbf{w}\). To record \(\varphi_{\mathbf{w}}\), we only need to know
		where each \(\perp_i\) maps (as they always fix everything else), and that
		is precisely the information that \(\mathbf{w}\) contains.
	\end{proof}

\section{Solution languages in virtually abelian groups}
	\label{sec:eqva}
	The purpose of this section is to prove that the multivariable solution
	languages to systems of equations in virtually abelian groups are accepted by
	multivariable finite-state automata, and so \(\#\)-joined solution languages
	are EDT0L, all with respect to a natural generating set and normal form. We
	do this by first showing that the multivariable solution languages for systems
	of twisted equations in free abelian groups are recognised by finite-state
	automata, and then prove that equations in virtually abelian groups reduce to
	twisted equations in free abelian groups. Throughout this section, when
	referring to free abelian groups, we will use additive notation. This means
	that equations in free abelian groups will be expressed as sums rather than
	`products'. When representing solution languages, we will express them using
	multiplicative notation, as this is more natural with languages, using \(a_1,
	\ \ldots, \ a_k\) to be the standard generators of \(\mathbb{Z}^k\).

	The next lemmas are used to prove that systems of equations, and therefore twisted
	equations, in free abelian groups have multivariable solution languages
	accepted by \(n\)-variable finite-state automata, where \(n\) is the number of
	variables. The fact that free abelian groups have EDT0L \(\#\)-joined languages is
	already known; Diekert, Je\.{z} and Kufleitner \cite{EDT0L_RAAGs} show that
	right-angled Artin groups have EDT0L \(\#\)-joined languages, and Diekert
	\cite{more_than_1700} has a more direct method for systems of equations in
	\(\mathbb{Z}\), which can easily be generalised to all free abelian groups.
	For the sake of completeness, we give our own argument here.
	
	We begin with the following technical definition.

	\begin{dfn}
		Let $B=[b_{ij}]$ be an $n\times m$ integer matrix. Then define a function $|\cdot|_B\colon\R^n\to\R$ via
		\[|(y_1,\ldots,y_n)|_B = \max\left(\left\lvert\sum_{i=1}^n y_ib_{i1}\right\rvert,\ \left\lvert\sum_{i=1}^n y_ib_{i2}\right\rvert,\ \ldots,\ \left\lvert\sum_{i=1}^n y_ib_{im}\right\rvert\right).\]
		In other words, $|\mathbf{y}|_B$ is the maximal absolute value of the coordinates of the vector $\mathbf{y}B$.
		
		Note that if $\mathbf{y}\in\Z^n$ then $|\mathbf{y}|_B\in\Z$, and that $|\cdot|_B$ satisfies the triangle inequality.
	\end{dfn}
	We now show that we can construct any solution to a system of equations while controlling the value of $|\cdot|_B$ at each intermediate point.
	\begin{lem}
		\label{eqn_bound_K_lem}
		Let $B$ be an $n\times m$ integer matrix, $X$ be a vector of $n$ variables,
		$\mathbf{c}\in\Z^m$, and consider the system of $n$ equations over $\Z$ given by
		$\mathbf{X}B=\mathbf{c}$. Write $b_{\max}=\max_{i,j}|b_{ij}|$ and let $K=
		\max(|c_1|,\ \ldots,\ |c_m|) + n^{3/2}\cdot b_{\max}$.

		Then, for each $\mathbf{x}\in\Z^n$ such that $\mathbf{x}B=\mathbf{c}$, there is a sequence \[\{0=\mathbf{x}^{(0)}, \ \mathbf{x}^{(1)}, \ \ldots, \ \mathbf{x}^{(k)} =\mathbf{x} \}\subset\Z^n\] with each $\mathbf{x}^{(j)}=\mathbf{x}^{(j-1)}+\mathbf{e}_j$ for some positive or negative standard basis vector $\mathbf{e}_j$, such that $|\mathbf{x}^{(j)}|_B\leq K$ for each $j\in\{1,\ \ldots,\ k\}$.
	\end{lem}
	\begin{proof}
	First, consider the straight line segment $L\subset\R^n$ from $0$ to $\mathbf{x}$. Since $B$ defines a linear transformation $\R^n\to\R^m$, the function $|\cdot|_B\colon\R^n\to\R$ is monotone non-decreasing as we move along $L$ from $0$ to $\mathbf{x}$. Therefore, for each $\mathbf{y}\in L$, we have  $|\mathbf{y}|_B\leq|\mathbf{x}|_B=\max(|c_1|,\ \ldots,\ |c_m|)$. To obtain the required sequence, we approximate $L$ with a piecewise linear path comprised of (positive and negative) standard basis vectors.

	Consider the set of unit $n$-cubes with integer-valued corners, which intersect $L$. From among the corners of these cubes, we can find a sequence $\{\mathbf{x}^{(j)}\}\subset\Z^n$ of integer-valued points, where subsequent terms share a cube edge (and so each $\mathbf{x}^{(j)}=\mathbf{x}^{(j-1)}+\mathbf{e}_j$ for some $\mathbf{e}_j$), such that $\mathbf{x}^{(0)}=0$ and $\mathbf{x}^{(k)}=\mathbf{x}$, for some $k$. We will show that each point in this sequence satisfies the required bound.
	
	Since the diameter of a unit $n$-cube is $\sqrt{n}$, each point $\mathbf{x}^{(j)}$ is a Euclidean distance of at most $\sqrt{n}$ from the line $L$. In other words, for each $j$ we have $\mathbf{x}^{(j)}=\mathbf{y}+\mathbf{d}$ for some $\mathbf{y}\in L$ and $\mathbf{d}=(d_1,\ldots,d_n)\in\R^n$ with $|d_i|\leq\sqrt{n}$. Then note that for any such $\mathbf{d}$ we have
	\begin{align*}
		|\mathbf{d}|_B &= \max\left(\left\lvert\sum_{i=1}^n d_ib_{i1}\right\rvert, \ \ldots, \ \left\lvert \sum_{i=1}^n d_ib_{im}\right\rvert \right)\\
		&\leq \max\left(\sum_{i=1}^n|d_i||b_{i1}|, \ \ldots, \ \sum_{i=1}^n |d_i||b_{im}| \right) \\
		&\leq \max\left( \sum_{i=1}^n\sqrt{n}\cdot b_{\max}, \ \ldots, \ \sum_{i=1}^n\sqrt{n}\cdot b_{\max}\right)  = (n\sqrt{n}) b_{\max}.
	\end{align*}
	We can then bound each element of the sequence as follows:
	\begin{align*}
		|\mathbf{x}^{(j)}|_B = |\mathbf{y}+\mathbf{d}|_B\leq |\mathbf{y}|_B + |\mathbf{d}|_B \leq \max(|c_1|,\ \ldots,\ |c_m|) + (n\sqrt{n})b_{\max}=K.
	\end{align*}
	Thus the sequence $\{\mathbf{x}^{(j)}\}$ satisfies the requirements of the Lemma.
	\end{proof}

	We now show that a system of twisted equations in $\Z^k$ can be reduced to a system of (non-twisted) equations in $\Z$.
	\begin{lem}\label{lem:Zsols}
		Let \(\mathcal{S}_\mathcal{E}\) be the solution set of a finite system
		\(\mathcal{E}\) of twisted equations in \(\mathbb{Z}^k\) in \(n\) variables.
		Then there is a finite system of equations \(\mathcal{F}\) in \(\mathbb{Z}\)
		with \(kn\) variables and solution set \(\mathcal{S}_\mathcal{F}\) such that
		\[
			\mathcal{S}_\mathcal{E} = \{((x_1, \ \ldots, \ x_k), \ (x_{k + 1}, \
			\ldots, \ x_{2k}), \ \ldots, \ (x_{(k - 1)n}, \ \ldots, \ x_{kn})) \mid
			(x_1, \ \ldots, \ x_{kn}) \in \mathcal{S}_\mathcal{F}\}.
		\]
	\end{lem}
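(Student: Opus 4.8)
The plan is to evaluate a twisted equation symbolically at a substitution of group elements, to observe that the outcome is an affine function of the substituted values because every automorphism of \(\mathbb{Z}^k\) is an integer linear map, and then to ``expand coordinates'', turning each \(\mathbb{Z}^k\)-valued equation into \(k\) scalar equations over \(\mathbb{Z}\).

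First I would use that \(\Aut(\mathbb{Z}^k)=\GL_k(\mathbb{Z})\), so that, writing \(\mathbb{Z}^k\) additively, every automorphism acts by multiplication by an invertible integer matrix. The key step is then to show that for each word \(w\in\mathbb{Z}^k\ast(F_V\times\Aut(\mathbb{Z}^k))\) there exist integer \(k\times k\) matrices \(M_{w,1},\ldots,M_{w,n}\) and a vector \(c_w\in\mathbb{Z}^k\) such that, for every homomorphism \(\phi\colon F_V\to\mathbb{Z}^k\) and with \(x_i=X_i\phi\),
\[
  w\bar\phi p \;=\; c_w + \sum_{i=1}^n x_i M_{w,i}.
\]
This is a routine induction on the syllable length of \(w\): one carries along the whole pair \(w\bar\phi\in\mathbb{Z}^k\times\Aut(\mathbb{Z}^k)\), whose second coordinate is the product of the automorphisms occurring in \(w\) (hence independent of \(\phi\)) and whose first coordinate has the form \(d_w+\sum_i x_i N_{w,i}\) with \(d_w\in\mathbb{Z}^k\) and \(N_{w,i}\) integer matrices depending only on \(w\); applying \(p\), that is, acting on the first coordinate by the second, then yields the displayed formula. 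The base cases are a constant of \(\mathbb{Z}^k\) and a single syllable \((v,\alpha)\in F_V\times\Aut(\mathbb{Z}^k)\), for which \(d_w\) and the \(N_{w,i}\) are read off directly (from the exponent sums of the variables in \(v\) and the matrix of \(\alpha\)), and the inductive step is a short computation with the group law of \(\mathbb{Z}^k\times\Aut(\mathbb{Z}^k)\).

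Now write the system as \(\mathcal{E}=\{w_1=\cdots=w_m=1\}\) in the variables \(X_1,\ldots,X_n\), and introduce \(kn\) new variables \(Y_1,\ldots,Y_{kn}\) over \(\mathbb{Z}\), with \((Y_{(i-1)k+1},\ldots,Y_{ik})\) to play the role of the coordinate vector of \(x_i\). Reading the identity \(w_t\bar\phi p=0\) one coordinate at a time turns each twisted equation \(w_t=1\) into the \(k\) ordinary linear equations over \(\mathbb{Z}\)
\[
  (c_{w_t})_\ell + \sum_{i=1}^n\sum_{\ell'=1}^k (M_{w_t,i})_{\ell'\ell}\,Y_{(i-1)k+\ell'} \;=\; 0, \qquad 1\le\ell\le k,
\]
and I take \(\mathcal{F}\) to be the union over \(t\in\{1,\ldots,m\}\) of these, a finite system of \(km\) linear equations in the \(kn\) variables \(Y_1,\ldots,Y_{kn}\).

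Finally I would check the stated correspondence: a homomorphism sending \(X_i\mapsto x_i\) solves \(\mathcal{E}\) exactly when \(w_t\bar\phi p=0\) for all \(t\), which by the construction above holds exactly when the tuple formed by concatenating the coordinate vectors of \(x_1,\ldots,x_n\) solves \(\mathcal{F}\). Reversing this and grouping a solution \((a_1,\ldots,a_{kn})\) of \(\mathcal{F}\) into \(n\) consecutive blocks of length \(k\), each block being the coordinate vector of the corresponding entry of the associated solution of \(\mathcal{E}\), gives exactly the displayed description of \(\mathcal{S}_{\mathcal{E}}\) in terms of \(\mathcal{S}_{\mathcal{F}}\). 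The only point requiring genuine care is the symbolic evaluation of the second paragraph; everything else is routine bookkeeping.
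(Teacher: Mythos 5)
Your proposal is correct and follows the same route as the paper: both arguments observe that evaluating a twisted equation in \(\mathbb{Z}^k\) at a tuple \((x_1,\ldots,x_n)\) yields an affine expression \(c+\sum_i x_i M_i\) with integer \(k\times k\) matrix coefficients (the paper states this directly by grouping variable occurrences in the form \(\mathbf{Y}_1B_1+\cdots+\mathbf{Y}_nB_n+\mathbf{C}=0\), whereas you justify it by induction on syllable length), and then read the vector identity one coordinate at a time to get \(k\) scalar linear equations per twisted equation, giving \(km\) equations in the \(kn\) coordinate variables. The only difference is cosmetic: your induction makes the "grouping" step more explicit, but the resulting system \(\mathcal{F}\) and the block-of-\(k\) correspondence between \(\mathcal{S}_\mathcal{F}\) and \(\mathcal{S}_\mathcal{E}\) are identical to what the paper produces.
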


	\begin{proof}
		Consider a twisted equation	in \(\mathbb{Z}^k\)
		\begin{align}
			\label{Zk_twisted_init_eqn}
			\mathbf{c}_0 + \mathbf{Y}_{i_1} \Phi_1 + \mathbf{c}_1 \cdots +
			\mathbf{Y}_{i_n} \Phi_n + \mathbf{c}_n = 0,
		\end{align}
 		where \(\mathbf{Y}_1, \ \ldots, \ \mathbf{Y}_n\) are variables,
 		\(\mathbf{c}_0, \ \ldots, \ \mathbf{c}_n \in \mathbb{Z}^n\) are constants,
 		and \(\Phi_1, \ \ldots, \ \Phi_n \in \GL_k(\mathbb{Z})\). Set \(\mathbf{c}
 		= \mathbf{c}_0 + \cdots + \mathbf{c}_n\). By grouping the occurences of
		each \(\mathbf{Y}_i\), we have that (\ref{Zk_twisted_init_eqn}) is
		equivalent to the following identity
		\begin{align}
			\label{Zk_pseudo_twisted_eqn}
				\mathbf{Y}_1 B_1 + \cdots \mathbf{Y}_n B_n + \mathbf{c} = 0,
		\end{align}
		where \(B_1 = [b_{1ij}], \ \ldots, \ B_n = [b_{nij}]\) are \(k \times k\)
		integer-valued matrices, although not necessarily in \(\GL_k(\mathbb{Z})\).
		We will first show that the solution set of	(\ref{Zk_pseudo_twisted_eqn}) is
		equal to the solution set of a system of \(k\) equations in \(\mathbb{Z}\).
		Write \(\mathbf{Y}_i = (Y_{i1}, \ \ldots, \ Y_{ik})\) and \(\mathbf{c} =
		(c_1, \ \ldots, \ c_n)\) for variables \(Y_{ij}\) over \(\mathbb{Z}\) and
		constants \(c_i \in \mathbb{Z}\), for each \(i\). Then \(\mathbf{Y}_i B_i =
		\left(\sum_{j = 1}^k b_{ij1} Y_{ij}, \ \ldots, \ \sum_{j = 1}^k b_{ijk}
		Y_{ij} \right)\), for each \(i\). It follows that the solution set of
		(\ref{Zk_pseudo_twisted_eqn}) is equal to the solution set of the following
		system of equations in
		\(\mathbb{Z}\):
		\begin{align*}
			& \sum_{i = 1}^n \sum_{j = 1}^k b_{ij1} Y_{ij} + c_i = 0 \\
			& \qquad \qquad \qquad \vdots \\
			& \sum_{i = 1}^n \sum_{j = 1}^k b_{ijk} Y_{ij} + c_i = 0.
		\end{align*}
		We can conclude that the lemma holds for single twisted equations in
		\(\mathbb{Z}^k\). It follows that the solution set to a system of \(m\)
		twisted equations in \(\mathbb{Z}^k\) will be constructible as stated in the
		lemma, from the solution set to a system of \(m\) of the above systems;
		that is a system of \(km\) equations in \(\mathbb{Z}\).
	\end{proof}

	Before we can prove Lemma \ref{free_abelian_twisted_n_reg_lem}, we need a
	slightly altered version of modular arithmetic, where we replace \(0\) with
	the quotient.

	\begin{ntn}
		For each \(n, \ r \in \mathbb{Z}_{\geq 0}\) with \(r > 0\), define
		\[
			n \modns^+ r = \left \{
			\begin{array}{cl}
				n \modns r & n \modns r \neq 0 \\
				r & n \modns r = 0.
			\end{array}
			\right.
		\]
	\end{ntn}

	We are now in a position to prove that multivariable solution languages to
	twisted equations in free abelian groups are accepted by multivariable finite
	state automata. We do this by expressing our equation as an identity of
	matrices, where the coefficients of the matrix determine the equation. This
	allows us to use the bound from Lemma \ref{eqn_bound_K_lem} to construct our
	automaton.

  \begin{lem}
    \label{free_abelian_twisted_n_reg_lem}
   	The multivariable solution language to a system of twisted equations in a
   	free abelian group, with respect to the standard generating set and normal
   	form, is accepted by a multivariable finite-state automaton.
  \end{lem}

  \begin{proof}
		Let \(\mathcal E\) be a system of $m$ twisted equations in \(\mathbb{Z}^k\)
		in \(n\) variables. Let \(\{a_1, \ \ldots, \ a_k\}\) denote the standard
		generating set for \(\mathbb{Z}^k\). By Lemma \ref{lem:Zsols}, there is a
		system of \(km\) equations \(\mathcal{F}\) in \(\mathbb{Z}\) with \(kn\)
		variables, such that the solution language to \(\mathcal{E}\) is equal to
		\[
			\mathcal{S}_\mathcal{E} = \left\{\left(a_1^{t_1} \cdots a_k^{t_k} , \ a_1^{t_{k + 1}}
			\cdots a_k^{t_{2k}}, \ \ldots , \ a_1^{t_{k(n - 1) + 1}} \cdots a_k^{t_{kn}} \right)
			\,\middle\vert\, (t_1, \ \ldots, \ t_{kn}) \text{ is a solution to } \mathcal{F}\right\}.
		\]

		We represent this new system \(\mathcal{F}\) via
		the identity $\mathbf{X}B=\mathbf{c}$ where
		\begin{itemize}
			\item $\mathbf{X}=(X_1,\ \ldots, \ X_{kn})$ is a row vector of $kn$ variables,
			\item $B = [b_{ij}]$ is a $kn\times km$ matrix of coefficients, and
			\item $\mathbf{c}\in\Z^{km}$ is a row vector of constants.
		\end{itemize}
		The constant of Lemma \ref{eqn_bound_K_lem} is then $K=\max(|c_1|, \ \ldots, \ |c_{km}|) + (kn)^{3/2}b_{\max}$.

		We can now show that the multivariable solution language is accepted by a
		\(kn\)-variable finite-state automaton, using the method described in Example
		\ref{exp_lin_n_reg_ex}. We define our automaton
		\(\mathcal{A}\) to have the set of states
		\[
			\{q_\mathbf{x} \mid \mathbf{x}=(x_1,\ \ldots \ ,x_{kn}) \in \mathbb{Z}^{kn}, \ |x_i| \leq
	   	K\},
		\]
		Our start state will be \(q_\mathbf{0}\), and \(q_{\mathbf{c}}\) will be our
		only accept state. Let \(\mathbf{w}_i\) be the \(kn\)-variable word with
		\(a_{i \modns^+ k}\) in the \(i\)th position, and \(\varepsilon\) elsewhere.
		We have an edge from \(q_\mathbf{x}\) to \(q_\mathbf{y}\) labelled with
		\(\mathbf{w}_j\) for all \(j\) such that \(\mathbf{x} + (b_{j1}, \ \ldots, \
		b_{j(kn)})  = \mathbf{y}\). By construction, the language accepted by
		\(\mathcal{A}\) is contained within \(\mathcal{S}_\mathcal{E}\). On the other hand, any word in \(\mathcal{S}_\mathcal{E}\) is accepted by \(\mathcal{A}\), by following an appropriate sequence as given by Lemma \ref{eqn_bound_K_lem}.
  \end{proof}

	We now consider the space complexity that is needed to construct the
	multivariable finite-state automaton defined in the proof of Lemma
	\ref{free_abelian_twisted_n_reg_lem}.

	\begin{rmk}
		\label{free_abelian_input_rmk}
		Before we can attempt to show anything about space complexity, we need to
		define the length of our input. Often, when talking about equations
		\(w \in F_V \ast G\), for a group \(G\) and set of variables \(V\), we take
		the length of \(w\) to be \(|w|\), with respect to the word metric on
		\(F_V \ast G\), which is inherited from \(F_V\) with respect to \(V\), and
		\(G\) with respect to a specified generating set.

		With free abelian, and also virtually abelian groups, we can write our
		equations more efficiently. Recall that we use \(a_1, \ \ldots, \ a_k\) to
		be the standard generating set of \(\mathbb{Z}^k\), when using
		multiplicative notation. By reordering an equation in \(n\) variables in
		\(\mathbb{Z}^k\), and we can assume it is in the form
		\[
			X_1^{b_1} \cdots X_n^{b_n} a_1^{c_1} \cdots a_k^{c_k} = 1,
		\]
		where \(X_1, \ \ldots, \ X_n \in V\), and
		\(b_1, \ \ldots, \ b_n \in \mathbb{Z}\). The length of this equation with
		respect to the word metric is
		\[
			\sum_{i = 1}^n |b_i| + \sum_{j = 1}^k |c_j|.
		\]
		However, since an integer \(r \in \mathbb{Z}\) can be stored using \(\log|r|
		+ C\) bits, for some constant \(C\), and we only need to store \(b_1, \
		\ldots, \ b_n, \ c_1, \ \ldots, \ c_k\), we can write this equation using
		\[
			\sum_{i = 1}^n \log|b_i| + \sum_{j = 1}^k \log|c_j| + Ckn
		\]
		bits. To write a twisted equation, we can first rearrange it to the form
		\[
			(X_1 B_1) \cdots (X_n B_n) a_1^{c_1} \cdots a_k^{c_k} = 1,
		\]
		for the matrices \(B_r = [b_{rij}]\) as described in the proof of Lemma
		\ref{lem:Zsols}. The matrices are all \(k \times k\) matrices, and therefore
		\(B_r\) can be stored using
		\[
			\sum_{i,j = 1}^k \log|b_{rij}| + C'k^2
		\]
		bits, for some constant \(C'\). 
	\end{rmk}

	\begin{lem}
		\label{free_abelian_space_comp_lem}
		The multivariable finite-state automaton defined in Lemma
		\ref{free_abelian_twisted_n_reg_lem} can be constructed in non-deterministic
		quadratic space.
	\end{lem}

	\begin{proof}
		Let \(k\) be the rank of the free abelian group, \(\mathcal{E}\) be the
		system of equations, \(n\) be the number of variables, and \(m\) be the
		number of equations. We start by converting \(\mathcal{E}\) into the form
		\(\mathbf{X} B = \mathbf{c}\) (all we need to store is \(B\) and
		\(\mathbf{c}\)). Let \(I \in \mathbb{Z}_{\geq 0}\) be the length of the
		input.

		Index the equations \(w_1, \ \ldots, \ w_m\). We copy each equation in
		\(\mathcal{E}\) into the work tape, so our work tape will now have the same
		size as our input. We have assumed our equations are already in the form
		stated in Remark \ref{free_abelian_input_rmk}, and converting them to
		additive notation means they will be in the form
		\[
			\mathbf{Y}_1 B_1 + \cdots \mathbf{Y}_n B_n = \mathbf{d},
		\]
		where each \(B_i\) is a \(k \times k\) matrix, each \(\mathbf{Y}_i\) is a
		variable, and \(\mathbf{d} \in \mathbb{Z}^k\). We will now construct the
		matrix \(B\) and the vector \(\mathbf{c}\). We write \(\mathbf{Y}_i =
		(Y_{i1}, \ \ldots, \ Y_{ik})\), and \(B_1 = [b_{1ij}], \ \ldots, \ B_n =
		[b_{nij}]\). For each equation
		\[
			\mathbf{Y}_1 B_1 + \cdots \mathbf{Y}_n B_n = \mathbf{d},
		\]
		add the following vectors as columns to \(B\), and store them in the
		work tape:
		\[
			(b_{111}, \ \ldots, \ b_{nk1}), \ \ldots (b_{11k}, \ \ldots, \ b_{nkk}).
		\]
		The matrix \(B\) will at this point be a \(kn \times km\)
		matrix.
		For each equation,
		we also append the entries of \(\mathbf{d}\) to the vector \(\mathbf{c}\).

		We now construct the states. Since our set of states is the set of all
		\(q_\mathbf{x}\) such that \(\mathbf{x}\in\mathbb{Z}^{kn}\) with
		each coordinate having absolute value at most \(K\), where \(K\) is from
		Lemma \ref{eqn_bound_K_lem}, we can construct the set of states by
		remembering the last state constructed, together with the bound \(K\), and
		proceeding in any `sensible' systematic manner, such as starting in one
		`corner', and running down each line in the `cube'. To do this, we need a
		memory that can store a vector of length \(kn\) at any time,
		with entries within \([-K, \ K]\).

		As in the proof of Lemma \ref{free_abelian_twisted_n_reg_lem}, \(K = \max(|c_1|,\ \ldots,\ |c_{km}|) +
		(kn)^{3/2}\cdot b_{\max}\), where \(b_{\max} = \max_{i, j} |b_{ij}|\). Recall
		\(I = \sum_i \log|c_i| + \sum_{i, j} \log|b_{ij}| + Ckn\), where \(C\)
		is a constant, as mentioned in Remark \ref{free_abelian_input_rmk}. Then
		\[
			\log K \leq \log|c_1| + \cdots + \log |c_{km}|  + \frac{3}{2} \log (kn)
			+ \log|b_{\max}| \leq \frac{3}{2} I
		\]
		So storing an integer within \([-K, \ K]\) requires \(\frac{3}{2} I\)
		bits, ignoring constants. Since \(kn \leq I\), storing a vector of length \(kn\)
		with entries in \([-K, \ K]\) requires at most \(\frac{3}{2}I^2\)
		bits.

		We can simply assign \(\mathbf{0}\) and \(\mathbf{c}\) as the start and
		accept states.

		We now need to compute the edges. Recall that we have an edge from
		\(q_\mathbf{x}\) to \(q_\mathbf{y}\) labelled with \(\mathbf{w}_j\) for all
		\(j\) such that \(\mathbf{x} + (b_{j1}, \ \ldots, \ b_{j(kn)})  =
		\mathbf{y}\), where \(\mathbf{w}_j\) is the \(kn\)-variable
		word with \(a_{i \modns^+ k}\) in the \(i\)th position and \(\varepsilon\)
		elsewhere. Therefore, we can go through the states systematically and
		add all of the outgoing edges, and we only need to remember the state we are on
		in order to compute and output its outgoing edges and their labels. To do this,
		we only need to record a vector of length \(kn\), the entries
		of which will lie in \([-K, \ K]\). As discussed before, this requires at
		most \(\frac{3}{2}I^2\) bits to store.
	\end{proof}

	In the next lemma, we show that the solution set to a system of equations in
	an arbitrary group can be expressed in terms of the solution set to a system
	of \emph{twisted} equations in a finite-index subgroup.

	\begin{lem}
		\label{lem:virtual_sols}
		Let \(G\) be a group, and \(T\) be a finite transversal of a normal subgroup
		\(H\) of finite index. Let \(\mathcal{S}\) be the solution set to a finite
		system of equations with \(n\) variables in \(G\). Then there is a finite
		set \(B \subseteq T^n\), and for each \(\mathbf{t} \in B\), there is a
		solution set \(A_\mathbf{t}\) to a system of twisted equations in \(H\),
		such that
		\[
			\mathcal{S} = \bigcup_{(t_1, \ \ldots, \ t_n) \in B}
			\left\{(h_1 t_1, \ \ldots, \  h_n t_n) \mid
			(h_1, \ \ldots, \ h_n) \in A_{(t_1, \ \ldots, \ t_n)} \right\}.
		\]
	\end{lem}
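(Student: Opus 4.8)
The plan is to pass to each coset of $H$ separately and translate the equations into twisted equations over $H$. Write the system as $w_1 = \cdots = w_m = 1$ with $w_i \in G \ast F_V$, where $V = \{X_1, \ldots, X_n\}$. Since $T$ is a transversal for $H$ in $G$, every potential solution assigns to each variable $X_j$ an element of the form $h_j t_j$ with $h_j \in H$ and $t_j \in T$, and conversely such a choice is a solution precisely when the substituted words evaluate to $1$. The set $B$ will be the set of those tuples $(t_1, \ldots, t_n) \in T^n$ for which the resulting system (now with the coset-representative part fixed and the $H$-parts as the new unknowns) has a solution; this is a subset of the finite set $T^n$, hence finite, and for each $\mathbf{t} \in B$ the set $A_{\mathbf{t}}$ will be exactly the set of tuples $(h_1, \ldots, h_n) \in H^n$ making the substituted system hold. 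With these definitions the displayed equality is immediate: the right-hand side ranges over all solutions, partitioned according to which coset each coordinate lands in.

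The substance of the argument is to check that, after fixing $\mathbf{t} = (t_1, \ldots, t_n) \in T^n$ and substituting $X_j \mapsto Z_j t_j$ for new variables $Z_j$ ranging over $H$, each word $w_i$ rewrites into (a constant times) a word in $H \ast (F_{\{Z_1, \ldots, Z_n\}} \times \Aut(H))$, i.e.\ into a twisted equation over $H$. I would do this by pushing all the coset representatives to the right. A typical syllable of $w_i$ is either a constant $g \in G$ or a variable power $X_j^{\pm 1} = (Z_j t_j)^{\pm 1}$. Reading the word left to right and using that $H \trianglelefteq G$, each time a coset-representative letter $t$ (coming from a $t_j$ or from writing a constant $g$ as $h \cdot t$ for $h \in H$, $t \in T$) passes to the right across an occurrence of $Z_j^{\pm 1}$, it conjugates that letter: $t^{-1} Z_j^{\pm 1} t = (Z_j^{\text{conj by } t})^{\pm 1}$, and conjugation by a fixed element of $G$ restricts to an automorphism of the normal subgroup $H$. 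Thus after fully moving all representative-letters to the far right we obtain a word of the form $(\text{product of conjugated } Z_j^{\pm 1}\text{'s interleaved with elements of } H) \cdot t$ for some $t \in T$; the equation $w_i = 1$ then says that the $H \ast (F \times \Aut H)$-part evaluates to $1$ and $t = 1$, i.e.\ $t$ must be the representative of the identity coset. (If $t \neq 1$ for some $i$, then $\mathbf{t} \notin B$ and $A_{\mathbf{t}} = \emptyset$; there is nothing to prove for that $\mathbf{t}$.) The exact automorphisms appearing depend on which representatives sit to the left of a given variable occurrence, which is why a single equation in $G$ may produce several twisted syllables for the same $Z_j$ — this is fine, since the definition of a twisted equation allows arbitrarily many syllables $(Z_j, \phi)$ with different $\phi \in \Aut(H)$.

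Carrying this out for every $i \in \{1, \ldots, m\}$ and every $\mathbf{t} \in T^n$ gives, for each $\mathbf{t}$, a finite system $\mathcal{E}_{\mathbf{t}}$ of twisted equations in $H$ in the variables $Z_1, \ldots, Z_n$; set $A_{\mathbf{t}}$ to be its solution set and $B = \{\mathbf{t} \in T^n \mid \text{every } w_i \text{ reduces to an equation with trivial } T\text{-part}\}$. A tuple $(g_1, \ldots, g_n) \in G^n$ lies in $\mathcal{S}$ iff, writing $g_j = h_j t_j$ uniquely with $h_j \in H$, $t_j \in T$, we have $(t_1, \ldots, t_n) \in B$ and $(h_1, \ldots, h_n) \in A_{(t_1, \ldots, t_n)}$; this is exactly the claimed union, and it is a disjoint union since the $t_j$ are uniquely determined by the $g_j$. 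The only real bookkeeping obstacle is the conjugation step — keeping track of which composite of conjugations-by-representatives acts on each occurrence of each variable as one moves letters past one another — but since $H$ is normal each such conjugation is a genuine automorphism of $H$, so nothing escapes the twisted-equation framework, and the argument goes through.
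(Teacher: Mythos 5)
Your argument is correct and follows essentially the same route as the paper's proof: substitute $X_j = Z_j t_j$ (the paper uses $X_j = Y_j Z_j$ with both factors as variables, then later fixes $Z_j = u_j$), push the transversal elements to one side using normality, and recognise the conjugations-by-transversal-elements as the automorphisms appearing in a twisted equation over $H$. The only differences are cosmetic — you push $T$-elements right where the paper pushes $H$-elements left, and your conjugation formula $t^{-1}Z_j^{\pm1}t$ has the conjugating element on the wrong side for a rightward push (it should be $tZ_j^{\pm1}t^{-1}$) — but neither affects the substance.
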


	\begin{proof}
	Let
		\begin{align}
			\label{virt_eqn}
			X_{i_{1j}}^{\epsilon_{1j}} g_{1j} \cdots X_{i_{pj}}^{\epsilon_{pj}} g_{pj}
			= 1
		\end{align}
		be a system of equations in \(G\), with variables \(X_1, \ \ldots, \ X_n\),
		where \(j \in \{1, \ \ldots, \ k\}\) for some constant \(k\), and
		\(\epsilon_{ij} \in \{-1, \ 1\}\) for all \(i\) and \(j\). Note that we can
		fold the constants that can occur before the first occurence of a variable
		\(X_{i_{1j}}\) in each equation into \(g_{pj}\) by conjugating. For each
		\(X_i\), define new variables \(Y_i\) and \(Z_i\) over \(H\) and \(T\)
		respectively such that \(X_i = Y_i Z_i\). We have that \(g_i = h_i t_i\) for
		some \(h_i \in H\) and \(t_i \in T\). Replacing these in (\ref{virt_eqn})
		gives
		\begin{align}
			\label{virt_subbed_in_eqn}
			(Y_{i_{1j}} Z_{i_{1j}})^{\varepsilon_{i_{1j}}} h_{1j} t_{1j} \cdots
			(Y_{i_{pj}} Z_{i_{pj}})^{\varepsilon_{i_{pj}}} h_{pj} t_{pj} = 1.
		\end{align}
		For each \(g \in G\), define \(\psi_g \colon G \to G\) by \(h \psi_g =  g h
		g^{-1}\). We will abuse notation, and extend this notation to define
		\(\psi_{Z_1}, \ \ldots, \ \psi_{Z_n}\). For each \(i\) and \(j\), let
		\[
			\delta_{ij} = \left\{
				\begin{array}{cl}
					0 & \epsilon_{ij} = 1 \\
					1 & \epsilon_{ij} = -1.
				\end{array}
			\right.
		\]
		It follows that (\ref{virt_subbed_in_eqn}) is equivalent to
		\begin{align*}
			(Y_{i_{1j}}^{\epsilon_{1j}} \psi_{Z_{i_{1j}}}^{\delta_{1j}}) Z_{i_{1j}}^{\epsilon_{1j}}
			h_{1j} t_{1j} \cdots
			(Y_{i_{pj}}^{\epsilon_{pj}} \psi_{Z_{i_{pj}}}^{\delta_{pj}}) Z_{i_{pj}}^{\epsilon_{pj}}
			h_{pj} t_{pj} = 1.
		\end{align*}
		By pushing all \(Y_i\)s and \(h_i\)s to the left, we obtain
		\begin{align}
			\label{virt_H_to_left_eqn}
			& (Y_{i_{1j}}^{\epsilon_{1j}} \psi_{Z_{i_{1j}}}^{\delta_{1j}})
			(h_{1j} \psi_{Z_{i_{1j}}}^{\epsilon_{1j}}) \cdots
			(Y_{i_{pj}}^{\epsilon_{pj}} \psi_{Z_{i_{pj}}}^{\delta_{pj}} \psi_{t_{(p - 1)j}}
			\psi_{Z_{i_{(p - 1)j}}}^{\epsilon_{(p - 1)j}} \cdots \psi_{t_{1j}}
			\psi_{Z_{i_{1j}}}^{\epsilon_{1j}})
			(h_{pj} \psi_{Z_{i_{pj}}}^{\epsilon_{pj}} \psi_{t_{(p - 1)j}} \cdots \psi_{t_{1j}}
			\psi_{Z_{i_{1j}}}^{\epsilon_{1j}}) \\
			\nonumber
			& Z_{i_{1j}}^{\epsilon_{1j}} t_{1j} \cdots Z_{i_{pj}}^{\epsilon_{pj}} t_{pj} = 1.
		\end{align}
		Note that if \((u_1, \ \ldots, \ u_n) \in T^n\) is a solution to the
		\(Z_i\)s within a solution to (\ref{virt_H_to_left_eqn}), then
		\(u_{i_1}^{\epsilon_1} t_1 \cdots u_{i_p}^{\epsilon_p} t_p \in H\). Let
		\(A\) be the set of all such tuples. Note that as \(T\) is finite, so is
		\(A\). Plugging a fixed \((u_1, \ \ldots, \ u_n) \in A\) into
		(\ref{virt_H_to_left_eqn}) gives a twisted equation in \(H\):
		\begin{align*}
			& (Y_{i_{1j}}^{\epsilon_{1j}} \psi_{u_{i_{1j}}}^{\delta_{1j}})
			(h_{1j} \psi_{u_{i_{1j}}}^{\epsilon_{1j}}) \cdots
			(Y_{i_{pj}}^{\epsilon_{pj}} \psi_{u_{i_{pj}}}^{\delta_{pj}} \psi_{t_{(p - 1)j}}
			\psi_{u_{i_{(p - 1)j}}}^{\epsilon_{(p - 1)j}} \cdots \psi_{t_{1j}}
			\psi_{u_{i_{1j}}}^{\epsilon_{1j}})
			(h_{pj} \psi_{u_{i_{pj}}}^{\epsilon_{pj}} \psi_{t_{(p - 1)j}} \cdots \psi_{t_{1j}}
			\psi_{u_{i_{1j}}}^{\epsilon_{1j}}) \\
			& u_{i_{1j}}^{\epsilon_{1j}} t_{1j} \cdots u_{i_{pj}}^{\epsilon_{pj}} t_{pj} = 1.
		\end{align*}
		Let \(B_{(u_1, \ \ldots, \ u_n)}\) be the solution set to the above system.
		It follows that the solution set to (\ref{virt_eqn}) equals
		\[
			\bigcup_{(u_1, \ \ldots, \ u_n) \in A} \{(f_1 u_1, \ \ldots, \ f_n u_n)
			\mid (f_1, \ \ldots, \ f_n) \in B_{(u_1, \ \ldots, \ u_n)}\}.
		\]
	\end{proof}

	The following proposition reflects a well-known fact about decidability of
	systems of equations in groups: if a group \(G\) has a finite index normal
	subgroup \(H\), such that there is an algorithm that determines if any
	system of twisted equations in \(H\) admits a solution, then there is an algorithm that determines if any system of (untwisted) equations \(G\) admits a solution.
	This fact turns out to be true regarding EDT0L solutions, and a variant of it
	is used in \cite{VF_eqns}.

	\begin{proposition}
		\label{virtually_twisted_EP_n_reg_prop}
		Let \(G\) be a group with a finite index normal subgroup \(H\), such that
		the multivariable solution language to any system of twisted equations in
		\(H\) is accepted by an \(n\)-variable finite-state automaton, for some \(n
		\in \mathbb{Z}_{>0}\), with respect to a generating set \(\Sigma\), and
		normal form \(\eta\).

		Then the multivariable solution language to any system of equations in \(G\)
		is accepted by an \(n\)-variable finite-state automaton, for some \(n \in
		\mathbb{Z}_{>0}\), with respect to the generating set \(\Sigma \cup T\), for any
		right transversal \(T\) of \(H\), and the normal form \(\zeta\), where
		\(g \zeta = (h \eta) t\), where \(h \in H\) and \(t \in T\) are (unique)
		such that \(g = ht\).
	\end{proposition}

	\begin{proof}
		We have from Lemma \ref{lem:virtual_sols}, that the solution language is
		a finite union across valid choices of transversal vectors \((t_1, \ \ldots,
		\ t_n)\) of
		\begin{align}
			\label{soln_lang_eqn}
			\{((h_1 \eta) t_1, \ \ldots, \ (h_n \eta) t_n) \mid (h_1, \ \ldots, \ h_n)
			\in A_{(t_1, \ \ldots, \ t_n)}\},
		\end{align}
		where \(A_{(t_1, \ \ldots, \ t_n)}\) is the solution set to a system of
		twisted equations in \(H\). Since the class of languages accepted by
		\(n\)-variable finite-state automata is closed under finite unions, it
		suffices to show that (\ref{soln_lang_eqn}) is accepted by an \(n\)-variable
		finite-state automaton.

		By our assumptions on \(H\), the language
		\[
			\{(h_1 \eta, \ \ldots, \ h_n \eta) \mid (h_1, \ \ldots, \ h_n) \in
			A_{(t_1, \ \ldots, \ t_n)}\}
		\]
		%
		is accepted by an \(n\)-variable finite-state automaton \(\mathcal{M}\), for
		any valid choice of transversal vector \((t_1, \ \ldots, \ t_n)\). We can
		therefore modify this automaton to accept
		\[
			\{(h_1 t_1, \ \ldots, \ h_n t_n) \mid (h_1, \ \ldots, \ h_n) \in A_{(t_1,
			\ \ldots, \ t_n)}\}
		\]
		We do this by adding a new state \(q\), and with an edge labelled \((t_1, \
		\ldots, \ t_n)\) from every accept state of \(\mathcal{M}\), and making \(q\)
		the only accept state. By construction, this accepts the stated language.
	\end{proof}

	\begin{rmk}
		Before we can talk about the space complexity of equations in virtually
		abelian groups, we need to define the size of our input. We do this by using
		the finite-index normal free abelian subgroup, as these have an efficient
		way of storing equations.

		Let \(G\) be a group with a finite-index subgroup \(H\), and suppose that
		there is a convention for input sizes of twisted equations in \(H\) (see
		Remark \ref{free_abelian_input_rmk} for free abelian groups). We define the
		length of a system of equations in \(G\) to be the length of the system of
		twisted equations in \(H\) derived in Lemma \ref{lem:virtual_sols}.
	\end{rmk}

	\begin{lem}
		\label{fin_ind_space_comp_lem}
		Let \(G\), \(H\), \(n\) and \(\Sigma\) be defined as in Proposition
		\ref{virtually_twisted_EP_n_reg_prop}, and suppose the multivariable
		finite-state automaton that accepts a system of twisted equations in \(H\),
		in the statement of Proposition \ref{virtually_twisted_EP_n_reg_prop}, is
		constructible in \(\mathsf{NSPACE}(f)\), where \(f \colon \mathbb{Z}_{\geq
		0} \to \mathbb{Z}_{\geq 0}\) is a function. Then the automaton that accepts
		a system of equations in \(G\) is also constructible in
		\(\mathsf{NSPACE}(f)\).
	\end{lem}

	\begin{proof}
	 	By Lemma \ref{finite_union_n_reg_space_complexity_lem}, it suffices to show
		that each automaton that accepts a language
		\[
			\{(h_1 t_1, \ \ldots, \ h_n t_n) \mid (h_1, \ \ldots, \ h_n) \in A_{(t_1,
			\ \ldots, \ t_n)}\},
		\]
		where \(A_{(t_1, \ldots, t_n)}\) is as defined in the proof of Proposition
		\ref{virtually_twisted_EP_n_reg_prop}. Recall that this is constructed from
		the automaton \(\mathcal{M}\) that accepts a system of twisted equations in
		\(H\) by adding one additional state \(q\), and edges from each accept state
		to \(q\), all labelled \((t_1, \ \ldots, \ t_n)\), and then by making \(q\)
		the only accept state. We do this by modifying the algorithm that constructs
		\(\mathcal{M}\) to add the state \(q\) at the beginning, then perform the
		algorithm that constructs \(\mathcal{M}\), except whenever we would label a
		state \(p\) as an accept state, we instead add an edge from \(p\) to \(q\),
		labelled by \((t_1, \ \ldots, \ t_n)\). This does not use a longer work tape
		than the algorithm that constructs \(\mathcal{M}\).
	\end{proof}

	We now have enough to prove our first main result. Our generating set is the
	union of the standard generating set \(\Sigma\) of the finite index free
	abelian subgroup together with a right transversal \(T\). We use the normal
	form
	\[
		\{at \mid a \in \Sigma, \ t \in T\}.
	\]

  \begin{theorem}
		\label{theorem:VA_n_reg}
    Multivariable solution languages to systems of equations in virtually
    abelian groups with \(n\) variables are accepted by \(n\)-variable
    finite-state automata.
  \end{theorem}

  \begin{proof}
    This follows from Proposition \ref{virtually_twisted_EP_n_reg_prop}, together
		with the fact that multivariable solution languages to systems of twisted
		equations in free abelian groups are accepted by \(n\)-variable
		finite-state automata (Lemma \ref{free_abelian_twisted_n_reg_lem}).
  \end{proof}

	Lemma \ref{n_reg_implies_EDT0L_lem} now gives us the following result.

	\begin{cor}
		\label{VA_EDT0L_cor}
		The \(\#\)-joined solution languages to systems of equations in virtually
		abelian groups are EDT0L.
	\end{cor}

	\begin{rmk}
		Corollary \ref{VA_EDT0L_cor} uses the normal form defined by writing an
		element of a virtually abelian group as a product of a word in the
		finite-index free abelian normal subgroup, written in standard normal form,
		with an element of the (finite) transversal for that subgroup.

		We can change our generating set to any other generating set, and there will
		exist a normal form such that solution languages are still EDT0L. Adding a
		new generator does not change the language at all, as we can keep the normal
		form the same, and so our new generator will not appear in any normal form
		word. To remove a redundant generator \(c\), we can fix a word \(w_c\) over
		the remaining generators and their inverses that represents the same element
		as \(c\), and apply the free monoid homomorphism that maps \(c\) to \(w_c\).
		This corresponds to changing the normal form used by replacing every
		occurence of \(c\) with \(w_c\).

		Changing the normal form is more difficult. In \cite{eqns_hyp_grps},
		Section 5, Ciobanu and Elder show that changing between quasigeodesic
		normal forms will not affect whether or not the solution language to a given
		system is EDT0L. This relies on the fact that in a hyperbolic group \(G\),
		the set of all pairs \((u, \ v)\) of \((\lambda, \ \mu)\)-quasigeodesics
		such that \(u =_G v\) is accepted by a 2-variable finite state automaton.
		Unfortunately, this doesn't work in \(\mathbb{Z}^2\), so a different approach would be required to preserve the EDT0L status of the
		language when changing between normal forms
		in virtually abelian groups.
	\end{rmk}

	We now combine the various lemmas on the space complexity of the algorithms we
	have used to construct multivariable finite-state automata and EDT0L systems
	to show the following.

	\begin{proposition}\label{prop:quadspace}
		The multivariable finite-state automaton from Theorem \ref{theorem:VA_n_reg}
		and the EDT0L system from Corollary \ref{VA_EDT0L_cor} can be constructed
		in non-deterministic quadratic space.
	\end{proposition}

	\begin{proof}
		The fact that the multivariable finite-state automaton can be constructed in
		non-deterministic quadratic space follows from Lemma
		\ref{fin_ind_space_comp_lem} and Lemma \ref{free_abelian_space_comp_lem}.
		Using this fact with Lemma \ref{n_reg_implies_EDT0L_lem} gives the
		second statement.
	\end{proof}

	To understand the the growth of the $\#$-joined solution language, we need the following Lemma.
	\begin{lem}\label{lem:twistedpoly}
		Let $A\subseteq(\Z^k)^n$ be the solution set to a system of twisted equations in $\Z^k$ (with $n$ variables). Then $A$ is a polyhedral subset of $\Z^{kn}$.
	\end{lem}
	\begin{proof}
		By Lemma \ref{lem:Zsols}, $A$ may be viewed as the set of solutions to a system of (non-twisted) equations in $\Z$, with $kn$ variables, with each element of $A$ given as a vector in $\Z^{kn}$, with respect to the standard basis of $\Z^{kn}$. Now a single such equation in $\Z$ may be expressed as
		\begin{equation*}
			\sum_{i=1}^{kn} a_ix_i = b
		\end{equation*}
		for variables $x_i$ and constants $a_i, \ b\in\Z$. Therefore the solution set to such an equation has the form
		\begin{align*}
			\left\{(x_1, \ \ldots, \ x_{kn})\in\Z^{kn}\, \ \middle\vert\, \sum_{i=1}^{kn} a_ix_i = b\right\} = \left\{\mathbf{x}\in\Z^{kn} \, \middle\vert \, \mathbf{a}\cdot\mathbf{x}=b\right\}
		\end{align*}
		and is thus an elementary set (see Definition \ref{def:polyhedral}). The solution set to a system of equations is then the intersection of finitely many elementary sets, and is therefore a polyhedral set by the definition.

	\end{proof}

	We can now use the polyhedral structure of solution sets in $\Z^k$ to prove the following Proposition about the growth of solution languages in virtually abelian groups.
	\begin{proposition}\label{prop:EDT0Lrational}
		The \(\#\)-joined solution language of any system of equations in a virtually abelian group has
		rational growth series.
	\end{proposition}

	\begin{proof}
		As before, let $G$ be a virtually abelian group and let $\Z^k$ denote a free abelian normal subgroup of finite index, and $T$ a choice of transversal. The normal form on $\Z^k$ given by the standard basis vectors is denoted $\eta$. By Lemma \ref{lem:virtual_sols}, the solution language is given by a finite union of sets of the form
		\begin{equation}\label{eq:solsgrowth}
		\{(h_1\eta)t_1\# (h_2\eta)t_2\#\cdots\# (h_n\eta)t_n\mid (h_1, \ \ldots, \ h_n)\in A_{\mathbf{t}}\}
		\end{equation}
		where $n$ is the number of variables, $\mathbf{t}=(t_1, \ \ldots, \ t_n)$ is some subset of $T^n$, and each $A_{\mathbf t}$ is the solution set to some system of twisted equations in $\Z^k$.

		Now, the word $(h_1\eta)t_1\#\cdots\#(h_n\eta)t_n\in \left(T\cup\{\#\}\cup\{\pm e_i\mid 1\leq i\leq kn\}\right)^*$ has length $2n-1+|(h_1, \ \ldots, \ h_n)|$. So the growth series of the set \eqref{eq:solsgrowth} is equal to the growth series of $A_{\mathbf t}$ multiplied by $z^{2n-1}$. That is,
		\begin{equation*}
			z^{2n-1}\sum_{m=0}^\infty \#\{(h_1, \ \ldots, \ h_n)\in A_{\mathbf t} \mid
			|(h_1, \ \ldots, \ h_t)|=m\} z^m.
		\end{equation*}

		Since each $A_{\mathbf{t}}$ is polyhedral by Lemma \ref{lem:twistedpoly}, Corollary \ref{cor:rationalpoly} implies that their growth series (with the weight of each generator equal to $1$ in this case) is rational, and hence the growth series of \eqref{eq:solsgrowth} is also rational. So the growth series of the solution language is a finite sum of rational functions, and therefore rational itself.
	\end{proof}

\begin{rmk}
	We note that the language above will not be context-free in general. For
	example, suppose the underlying group is $\Z=\langle x\rangle$, and consider
	the equation $X=Y=Z$ (more formally the system of equations
	$XY^{-1}=YZ^{-1}=1$). In the notation of this paper, the set of solutions is
	$\{a^m \# a^m \# a^m \mid m \in\Z\}$, which is not context-free over the alphabet
	\(\{a, \ a^{-1}, \ \#\}\) by standard techniques.

	Thus we have a large class of EDT0L languages, with rational growth series, which are not, in general, context-free.
\end{rmk}


\section{Relative growth of algebraic sets}\label{sec:growth}
	We now study the nature of algebraic sets from a different point of view. Expanding on the theme of Proposition \ref{prop:EDT0Lrational}, we consider the \emph{growth} of algebraic sets, this time as sets of tuples of group elements, with respect to a natural metric inherited from the word metric on the group.

	The usual notion of the growth function of a group can be altered by restricting to a subset. This is known as \emph{relative growth}. The study of relative growth of subgroups in particular has attracted significant interest, for example Davis-Olshanskii \cite{DO}, and recently Cordes-Russell-Spriano-Zalloum \cite{Morse}. Here, we define and study the relative growth of algebraic sets. Since such a set is a subset of $G^n$, rather than $G$ itself, we must decide how to assign lengths to tuples. We do this in perhaps the most obvious way, by taking the sum of the lengths of the components (see Definition \ref{def:alggrowth}).

	Since the growth of virtually abelian groups is always polynomial (that is, the number of elements of length $n$ is at most polynomial in $n$), it is clear that the same will be true of algebraic sets. Instead, we study the growth series, the formal power series associated to the relative growth function of an algebraic set, and show that this is always a rational function (see Theorem \ref{thm:soln_sets_rational}). This means that there exists a set of unique geodesic representatives for each algebraic set, which has rational growth series as a language.

	An alternative approach which avoids the need to define the length of $n$-tuples of group elements is to study the \emph{multivariate} growth series, the formal power series in $n$ variables, which correspond to the $n$ variables of the system of equations in question (see Definition \ref{def:alggrowth}). In this case, we have the weaker result that the series is always holonomic (Corollary \ref{cor:holonomic}).


	From now on we will assume that $G$ is virtually abelian with a normal, finite index subgroup isomorphic to $\Z^k$ for some positive integer $k$.

	\begin{dfn}\label{def:growth}
		Let $G$ be generated by a finite set $S$ and suppose $S$ is equipped with a weight function $\|\cdot\|\colon S\to \Z_{>0}$. This naturally extends to $S^*$ so that $\|s_1s_2\cdots s_k\|=\sum_{i=1}^k\|s_i\|$.
		\begin{enumerate}
		\item Define the weight of a group element as
		\[\|g\|=\min\{\|w\|\mid w\in S^*, \ w=_Gg\}.\] Any word representing $g$ whose weight is equal to $\|g\|$ will be called \emph{geodesic}. This coincides with the usual notion of word length when the weight of each non-trivial generator is equal to $1$.
		\item Let $V\subseteq G$ be any subset. Then the \emph{relative weighted growth function} of $V$ relative to $G$, with respect to $S$, is defined as \[\sigma_{V\subseteq G,S}(m)=\#\{g\in V\mid \|g\|=m\}.\]
		For simplicity of notation, we will write $\sigma_V(m)$ when the other information is clear from context.
		\item The corresponding \emph{weighted growth series} is the formal power series \[\mbS_{V\subseteq G,S}(z)=\sum_{m=0}^\infty \sigma_{V\subseteq G,S}(m)z^m.\]
		\end{enumerate}
	\end{dfn}

	Benson proved in \cite{Benson} that the series $\mbS_{G\subseteq G}(z)$ is always rational (that is, the standard growth series of $G$), and the first named author proved in \cite{Evetts} that for any subgroup $H$ of $G$, the series $\mbS_{H\subseteq G}(z)$ is always rational. Both of these results hold regardless of the choice of finite weighted generating set. As discussed, we wish to apply these ideas to algebraic sets, which are subsets of $G^n$ in general, for some positive integer $n$. Therefore, we extend Definition \ref{def:growth} as follows.
	\begin{dfn}\label{def:alggrowth}
		Let $G$ be generated by a finite set $S$, equipped with a weight function $\|\cdot\|$.
		\begin{enumerate}
			\item 	Let $\mathbf{x}=(x_1, \ \ldots, \ x_n)\in G^n$ be any $n$-tuple of elements of $G$.  Define the weight of $\mathbf{x}$ as follows: \[\|\mathbf{x}\|=\min\left\{\sum_{i=1}^n \|v_i\|\, \ \middle\vert\, v_i\in S^*,~ v_i=_G x_i,~1\leq i\leq n\right\}=\sum_{i=1}^n\|x_i\|.\]
			\item Let $V\subseteq G^n$ be any set of $n$-tuples of elements. Then the \emph{relative weighted growth function} of $V$ is defined as the function \[\sigma_{V\subseteq G^n,S}(m)=\#\{\mathbf{x}\in V\mid \|\mathbf{x}\|=m\}.\]
			\item The corresponding (univariate) \emph{weighted growth series} is	\[\mbS_{V\subseteq G^n,S}(z)=\sum_{m=0}^\infty\sigma_{V\subseteq G^n,S}(m)z^m\in\Q[[z]].\]
			\item The \emph{multivariate growth series} is
			\[\mathbb{M}_{V\subseteq G^n,S}(z_1,\ldots,z_n)=\sum_{\mathbf{x}\in V\subseteq G^n} z_1^{\|x_1\|}\cdots z_n^{\|x_n\|}\in\Q[[z_1,z_2,\ldots,z_n]].\]
		\end{enumerate}
	We will suppress some or all of the subscripts when it is clear what the notation refers to.
	\end{dfn}

	With these definitions, we can state the main result of this section.
	\begin{theorem}
		\label{thm:soln_sets_rational}
		Let $G$ be a virtually abelian group. Then every algebraic set of $G$ has rational weighted growth series with respect to any finite generating set.
	\end{theorem}

	\subsection{Structure of virtually abelian groups}
	To prove the Theorem, we will extend the framework used in \cite{Benson} and \cite{Evetts} to apply to our setting. We give the necessary definitions and results below, and refer the reader to the above mentioned articles for full details.

	\begin{dfn}
		As above, fix a finite generating set $S$ for $G$.
		\begin{enumerate}
			\item We define $A=S\cap\Z^k$ and $B=S\setminus A$. Any word in $B^*$ will be called a \emph{pattern}.
			\item Let $A=\{x_1, \ \ldots, \ x_r\}$, and $\pi=y_1y_2\cdots y_l$ be some pattern (with each $y_i\in B$). Then a word in $S^*$ of the form
			\begin{equation}\label{eq:patternedword}
			w=x_1^{i_1}x_2^{i_2}\cdots x_r^{i_r} y_1 x_1^{i_{r+1}}x_2^{i_{r+2}}\cdots x_r^{i_{2r}} y_2 \cdots y_l x_1^{i_{lr+1}}x_2^{i_{lr+2}}\cdots x_r^{i_{lr+r}}
			\end{equation}
			for non-negative integers $i_j$ is called a \emph{$\pi$-patterned word}. For a fixed $\pi\in B^*$, denote the set of all such words by $W^\pi$.
		\end{enumerate}
	\end{dfn}
	This definition allows us to identify patterned words with vectors of non-negative integers, by focussing on just the powers of the generators in $A$ as follows.
	\begin{dfn}\label{def:phipi}
		Fix a pattern $\pi$ of length $l$, and write $m_\pi=lr+r$. Define a bijection $\phi_{\pi}\colon W^{\pi}\to\Z_{\geq0}^{m_\pi}$ via
		 \[\phi_\pi\colon x_1^{i_1}x_2^{i_2}\cdots x_r^{i_r} y_1 x_1^{i_{r+1}}x_2^{i_{r+2}}\cdots x_r^{i_{2r}} y_2 \cdots y_l x_1^{i_{lr+1}}x_2^{i_{lr+2}}\cdots x_r^{i_{lr+r}}\mapsto (i_1, \ i_2, \ \ldots, \ i_{lr+r}).\]
	\end{dfn}
	This bijection will allow us to count subsets of $\Z^{m_\pi}$ in place of sets of words. We apply the weight function $\|\cdot\|$ to $\Z^{m_\pi}$ in the natural way, weighting each coordinate with the weight of the corresponding $x\in A$. More formally, we have
	\begin{equation*}
	\|(i_1, \ \ldots, \ i_{m_\pi})\| := \sum_{j=1}^{m_\pi} i_j\|x_{j\modns^+ r}\|.
	\end{equation*}
	Then $\phi_\pi$ preserves the weight of words in $W^\pi$, up to a constant:
	\begin{equation*}
	\|w\phi_\pi\| = \|w\| - \|\pi\|.
	\end{equation*}
	Fix a transversal $T$ for the cosets of $\Z^k$ in $G$. Note that, since $\Z^k$ is a normal subgroup, we can move each $y_i$ in the word \eqref{eq:patternedword} to the right, modifying only the generators from $A$, and we have $\overline{w}\in\Z^k\overline{\pi}$. Thus $\overline{W^\pi}\subset\Z^kt_\pi$ for some $t_\pi\in T$ where $\overline{\pi}\in\Z^kt_\pi$.

	It turns out that we can pass from a word $w\in W^\pi$ to the normal form (with respect to $T$ and the standard basis for $\Z^k$) of the element $\overline{w}$ using an integral affine transformation.
	\begin{proposition}[Section 12 of \cite{Benson}]\label{prop:affA}
		For each pattern $\pi\in B^*$, there exists an integral affine transformation $\cA^\pi\colon\Z^{m_\pi}_{\geq0}\to\Z^k$ such that $\overline{w}=\left(w\phi_\pi\cA^\pi\right)t_\pi$ for each $w\in W^\pi$.
	\end{proposition}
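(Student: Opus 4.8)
The plan is to compute $\overline{w}$ directly, using the normality of $\Z^k$ in $G$ to migrate all the pattern letters $y_1,\ldots,y_l$ to the right-hand end of the word, and then to observe that the resulting $\Z^k$-component of $\overline{w}$ depends affinely on the exponent vector $w\phi_\pi$.

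Fix a pattern $\pi=y_1\cdots y_l\in B^*$ and write a typical $w\in W^\pi$ as in \eqref{eq:patternedword}. Then $\overline{w}=v_0\,\overline{y_1}\,v_1\,\overline{y_2}\cdots\overline{y_l}\,v_l$, where $v_j\in\Z^k$ is the element represented by the $j$-th block $x_1^{i_{jr+1}}\cdots x_r^{i_{jr+r}}$ of generators from $A$ (so $v_0$ is the leftmost block, $v_l$ the rightmost). First I would record the elementary telescoping identity
\[
\overline{w}=\left(v_0\cdot\overline{y_1}v_1\overline{y_1}^{\,-1}\cdot\overline{y_1y_2}v_2\overline{y_1y_2}^{\,-1}\cdots\overline{y_1\cdots y_l}v_l\overline{y_1\cdots y_l}^{\,-1}\right)\cdot\overline{y_1\cdots y_l},
\]
valid in any group. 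Since $\Z^k\trianglelefteq G$, conjugation by $\overline{y_1\cdots y_j}$ restricts to an automorphism $\beta_j$ of $\Z^k$, i.e.\ an element of $\GL_k(\Z)$; and since $\Z^k$ is abelian the conjugated factors commute, so in additive notation
\[
\overline{w}=\left(\sum_{j=0}^{l}v_j\beta_j\right)+\overline{\pi},
\]
where $\beta_0=\id$ and $\overline{\pi}=\overline{y_1\cdots y_l}$.

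Next I would use two linearity observations. Each $v_j$ is a $\Z$-linear function of its coordinate block, namely $v_j=\sum_{s=1}^{r}i_{jr+s}\,\overline{x_s}$ (here $\overline{x_s}\in\Z^k$ because $A\subseteq\Z^k$), so $v_j$ is obtained from $(i_{jr+1},\ldots,i_{jr+r})$ by an integer matrix; composing with the integer matrix of $\beta_j$ keeps it $\Z$-linear. Summing over $j$ shows that $(i_1,\ldots,i_{m_\pi})\mapsto\sum_{j=0}^{l}v_j\beta_j$ is the linear part of an integral affine transformation $\Z^{m_\pi}\to\Z^k$. Finally, writing $\overline{\pi}=d\,t_\pi$ with $d\in\Z^k$ and $t_\pi\in T$ (possible precisely because $\overline{\pi}$ lies in the coset $\Z^k t_\pi$), we get $\overline{w}=\left(\sum_{j=0}^{l}v_j\beta_j+d\right)t_\pi$. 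So taking $\cA^\pi$ to be the integral affine transformation with the linear part above and constant vector $d$ yields $\overline{w}=(w\phi_\pi\,\cA^\pi)\,t_\pi$, as required.

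The only real work lies in the telescoping bookkeeping and in respecting the paper's right-action conventions; the substantive inputs — that each $\beta_j$ genuinely lies in $\GL_k(\Z)$, and that the generators in $A$ embed $\Z$-linearly into $\Z^k$ — are immediate from $\Z^k$ being a finite-index normal subgroup with $A\subseteq\Z^k$. I therefore expect no serious obstacle beyond keeping the indices straight.
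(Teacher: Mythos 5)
Your proof is correct, and it is the standard argument (essentially what Benson does in Section 12): push the pattern letters to the right via the telescoping identity, use normality of $\Z^k$ to replace each conjugated block by its image under the corresponding $\GL_k(\Z)$ automorphism $\beta_j$, and observe that each block $v_j$ is $\Z$-linear in its coordinate slice, so $\sum_j v_j\beta_j$ plus the constant $d$ (from $\overline{\pi}=d\,t_\pi$) gives the required integral affine map. The paper cites this result from Benson without reproving it, so there is no in-paper proof to compare against, but your argument is precisely the one expected.
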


	Observe that the union $\bigcup W^\pi$ of patterned sets taken over all patterns $\pi$ contains a geodesic representative for every group element (since any geodesic can be arranged into a patterned word without changing its image in the group). However, this is an infinite union, since patterns are simply elements of $B^*$.

	Consider the extended generating set $\widetilde{S}$
	defined as follows:
	\[
		\widetilde{S}=\{s_1s_2\cdots s_c\mid s_i\in S,~1\leq c\leq [G\colon\Z^k]\}.
	\]
	Define a weight function $\|\cdot\|_{\sim}\colon\widetilde{S}\to \mathbb{Z}_{> 0}$ via
	$\|s_1s_2\cdots s_c\|_\sim=\sum_{i=1}^c\|s_i\|$. Notice that although group elements will have different lengths with respect to this new generating set, we have $\|g\|_\sim=\|g\|$ for any $g\in
	G$. Thus the weighted growth functions, and hence series, of any subset $V\subseteq G$
	with respect to $S$ and $\widetilde{S}$ are equal. The following fact shows that
	passing to this extended generating set means we only need consider finitely
	many patterns.
	\begin{proposition}[11.3 of \cite{Benson}]
		Every element of $G$ has a geodesic representative with a pattern whose length (with respect to $\wt{S}$) does not exceed $[G\colon\Z^k]$.
	\end{proposition}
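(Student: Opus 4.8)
The plan is to argue by induction on the weighted length $\|g\|$, working throughout with the extended generating set $\wt{S}$; this is harmless because $\|g\|_\sim=\|g\|$ for every $g\in G$, so geodesics over $\wt{S}$ realise the same lengths as geodesics over $S$. Write $\wt{A}=\wt{S}\cap\Z^k$ and $\wt{B}=\wt{S}\setminus\wt{A}$, so that patterns over $\wt{S}$ are words in $\wt{B}^*$; a short preliminary induction on word length (splitting off a sub-word of length at most $d$ lying in $\Z^k$ whenever the coset walk of a product of $S$-generators repeats within its first $d+1$ steps) shows that $\wt{A}$ generates $\Z^k$, so that elements of $\Z^k$ can always be ``stored'' in $\wt{A}$-blocks. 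Given $g$, I would take any $\wt{S}$-geodesic $w$ for it. The $\wt{A}$-letters occurring between two consecutive $\wt{B}$-letters of $w$ all lie in the abelian group $\Z^k$, so permuting them into the order $x_1^{i_1}\cdots x_r^{i_r}$ changes neither the weighted length nor the image; hence we may assume $w=W_0\,y_1\,W_1\,y_2\cdots y_l\,W_l$ is a $\pi$-patterned word, with $\pi=y_1\cdots y_l\in\wt{B}^*$ and each $W_i$ a word over $\wt{A}$. Passing to the finite quotient $G/\Z^k$, which has $d$ elements, records a walk $1=c_0,\ c_1=\overline{y_1},\ \ldots,\ c_l=\overline{y_1\cdots y_l}$, whose final value is the coset of $g$, and with $c_i\ne c_{i-1}$ for all $i$ since each $y_i\notin\Z^k$.

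If $l\le d$ we are finished, so assume $l>d$. Since $G/\Z^k$ has only $d$ elements, the walk $c_0,\ldots,c_l$ must revisit a coset; a pigeonhole argument (in fact an iterated one, applied to the walk's values at regularly spaced indices) produces indices $i<j$ with $c_i=c_j$, with $j-i>d$ and with $(i,j)\ne(0,l)$. The sub-word $v:=y_{i+1}\,W_{i+1}\cdots y_j\,W_j$ then has $\overline{v}=c_i^{-1}c_j=1$ in $G/\Z^k$, so $z:=\overline{v}\in\Z^k$; being a sub-word of the geodesic $w$ it is itself a $\wt{S}$-geodesic for $z$, and because the complementary part of $w$ is non-empty (as $(i,j)\ne(0,l)$) and contains at least one $\wt{B}$-letter, we have $\|z\|=\|v\|<\|g\|$. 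By the inductive hypothesis, $z$ has a $\wt{S}$-geodesic patterned representative $v'$ with at most $d$ pattern letters. Substituting $v'$ for $v$ inside $w$ and merging the adjacent $\wt{A}$-blocks yields a $\pi'$-patterned word $w'$ of the same weighted length $\|g\|$ — hence still a geodesic for $g$ — whose pattern has length at most $i+d+(l-j)=l-(j-i-d)<l$. Iterating this inner reduction on the integer $l$ drives the pattern length down to at most $d$.

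The step I expect to be the real obstacle is the pigeonhole in the second paragraph. The naïve choice of a repeated coset only guarantees a sub-segment with $j-i\le d$ pattern letters, which the $\le d$-letter replacement supplied by the inductive hypothesis need not improve upon; one must instead locate a coincidence $c_i=c_j$ whose segment is \emph{strictly longer} than $d$ while still leaving a non-empty complement, and verify that such a coincidence always exists once $l$ is sufficiently larger than $d$ — this is where one uses that $c_i\ne c_{i-1}$, so that a coset cannot recur at consecutive steps and the gaps between its successive visits are at least $2$. The remaining small cases, where $l$ exceeds $d$ but only slightly, have to be handled by a more hands-on analysis of walks of length $d+1,d+2,\ldots$ in the finite group $G/\Z^k$; this, together with keeping careful track of which sub-words remain geodesic under the substitutions (the point being that a sub-word of a geodesic is geodesic, and our replacements are length-preserving), is the technical heart of the argument.
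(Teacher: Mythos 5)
The statement is quoted in the paper as Proposition 11.3 of Benson~\cite{Benson} with no proof given, so there is no in-paper argument to compare against. Judged on its own terms, your proposal has a genuine gap, and it is the one you yourself flag: the inductive hypothesis only supplies a replacement sub-word with pattern length \emph{at most} $d$, so the substitution strictly shortens the pattern only when the repeated-coset segment has $j-i>d$. You assert that such a long repeat ``always exists once $l$ is sufficiently larger than $d$,'' but pigeonhole plus the observation $c_{i-1}\neq c_i$ only forces a gap $>d$ once $l$ is on the order of $d^2$ (a coset must be visited roughly $d/2$ times before its first and last visits are more than $d$ apart), so the range of ``bad'' pattern lengths is not a handful of cases to check by hand --- it is the whole interval $d<l\lesssim d^2$, and the reduction can stall anywhere in it. Concretely, with $d=3$ and coset walk $c_0,\ldots,c_4 = 1,a,b,a,b$ ($l=4>d$), every repeat is $(1,3)$ or $(2,4)$ with gap $2<d$; replacing either $v=y_2u_2y_3$ or $v=y_3u_3y_4$ by an IH-geodesic of pattern length up to $3$ gives a word of pattern length $1+3+1=5$ or $2+3=5$, which is \emph{worse}. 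Nothing in your argument rules out such walks, because the only place geodesy is used is to get $\|z\|<\|g\|$, and that inequality is unaffected.

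The deeper issue is structural: the segment-replacement scheme can only rewrite a sub-word of a fixed geodesic, whereas establishing the bound seems to require either choosing a geodesic cleverly from the outset or exploiting the passage from $S$-words to $\wt{S}$-words more actively (e.g.\ absorbing a short block $y_{i+1}\cdots y_j$, $j-i\le d$, returning to the same coset into a single $\wt{A}$-letter, and dealing with the intervening $A$-blocks by conjugation). Your write-up acknowledges the missing step but labels it ``the technical heart'' and does not supply it; as it stands the proof is incomplete, and I do not see how the small-$l$ cases can be finished by inspection of walks alone without a new idea along the lines above.
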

	\begin{dfn}
		Let $P$ denote the set of patterns of length at most $[G\colon\Z^k]$ (with respect to $\wt{S}$).
	\end{dfn}
	From now on we will implicitly work with the extended generating set, allowing us to restrict ourselves to the finite set of patterns $P$.

	We now reduce each $W^\pi$ so that we have only a single geodesic representative for each element of $G$.

	\begin{theorem}[Section 12 of \cite{Benson}]\label{thm:Upi}
		For each $\pi\in P$, there exists a set $U^\pi\subset W^\pi$ such that every word in $U^\pi$ is geodesic, every element in $G$ is represented by some word in $\bigcup_{\pi\in P} U^\pi$, and no two words in $\bigcup_{\pi\in P} U^\pi$ represent the same element. Furthermore, each $U^\pi\phi_\pi$ is a polyhedral set in $\Z^{m_\pi}$.
	\end{theorem}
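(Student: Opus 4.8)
The plan is to follow the scheme of Section 12 of \cite{Benson}: realise each $U^\pi$ by cutting down the positive orthant $W^\pi\phi_\pi=\Z_{\geq0}^{m_\pi}$ first to a polyhedral set $V^\pi$ of coordinate vectors whose associated word is geodesic, and then to the subset $U^\pi\phi_\pi\subseteq V^\pi$ of those vectors that are \emph{canonical} for their group element with respect to a fixed order. Three facts make every relevant condition polyhedral: (i) the weight of $w\in W^\pi$ is the affine function $\|w\|=\|w\phi_\pi\|+\|\pi\|$ of its coordinate vector, with the linear weight on $\Z^{m_\pi}$ as defined above; (ii) by Proposition \ref{prop:affA}, $\overline{w}=(w\phi_\pi\cA^\pi)t_\pi$, so two patterned words $w\in W^\pi$ and $w'\in W^{\pi'}$ represent the same element of $G$ exactly when $t_\pi=t_{\pi'}$ and $w\phi_\pi\cA^\pi=w'\phi_{\pi'}\cA^{\pi'}$, a system of integer-linear equations in the combined coordinates; and (iii) the earlier result 11.3 of \cite{Benson}, that every element of $G$ has a geodesic representative with a pattern in the finite set $P$, so that geodesy can be tested by comparing weights only across the finite family $\{W^{\pi'}\}_{\pi'\in P}$.

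First I would, for each ordered pair $\pi,\pi'\in P$, let $E_{\pi,\pi'}\subseteq\Z^{m_\pi+m_{\pi'}}$ be the set of $(v,v')$ for which $\phi_\pi^{-1}(v)$ and $\phi_{\pi'}^{-1}(v')$ represent the same element of $G$ (extending each $\cA^\pi$ to all of $\Z^{m_\pi}$ by the same matrix and translation). By (ii) this is empty, hence vacuously polyhedral, when $t_\pi\neq t_{\pi'}$, and otherwise it is cut out by the $k$ integer-linear equations $v\cA^\pi=v'\cA^{\pi'}$, so it is a basic polyhedral set. Then
\[
	N^\pi=\left\{\, v\in\Z_{\geq0}^{m_\pi}\mid \exists\,\pi'\in P,\ \exists\, v'\ \text{with}\ (v,v')\in E_{\pi,\pi'}\ \text{and}\ \|v'\|+\|\pi'\|<\|v\|+\|\pi\| \,\right\}
\]
is the finite union, over $\pi'\in P$, of the projections to the first $m_\pi$ coordinates of $E_{\pi,\pi'}\cap\{(v,v'):\|v'\|+\|\pi'\|<\|v\|+\|\pi\|\}$; the strict inequality is an elementary set of the second kind after rearrangement, so each such set is polyhedral, and $N^\pi$ is polyhedral by Propositions \ref{prop:polyclosed} and \ref{prop:polyaffine} (projection onto a subset of coordinates being an integral affine transformation). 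Set $V^\pi=\Z_{\geq0}^{m_\pi}\setminus N^\pi$, which is polyhedral; by construction $\phi_\pi^{-1}(V^\pi)$ is exactly the set of geodesic $\pi$-patterned words, and by (iii) the union $\bigcup_{\pi\in P}\phi_\pi^{-1}(V^\pi)$ contains a geodesic representative of every element of $G$.

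It remains to impose uniqueness. Fix a total order $\pi_1<\cdots<\pi_{|P|}$ on $P$ and the lexicographic order on each $\Z_{\geq0}^{m}$, and order the disjoint union $\bigsqcup_{\pi\in P}V^\pi$ by pattern index first and lexicographically within a pattern. For fixed $g\in G$ the set of pairs $(\pi,v)$ with $v\in V^\pi$ and $\overline{\phi_\pi^{-1}(v)}=g$ is finite --- each such $v$ has weight $\|g\|-\|\pi\|$, and $\Z_{\geq0}^{m_\pi}$ contains only finitely many vectors of a given weight because every $\|x_i\|$ is positive --- so it has a unique minimum, which I declare to be the representative of $g$. Define $U^\pi\phi_\pi$ to be the set of $v\in V^\pi$ that are the representative of $\overline{\phi_\pi^{-1}(v)}$; equivalently, $v\in V^\pi$ such that (a) no $\pi'<\pi$ admits $v'\in V^{\pi'}$ with $(v,v')\in E_{\pi,\pi'}$, and (b) no $v'\in V^\pi$ has $(v,v')\in E_{\pi,\pi}$ and $v'<_{\mathrm{lex}} v$. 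Condition (a) removes from $V^\pi$ a finite union of projections of polyhedral sets, and (b) does likewise, once one notes that $\{(v,v')\mid v'<_{\mathrm{lex}} v\}$ is a finite union of basic polyhedral sets (equality of the leading coordinates up to some index $j$, together with one strict inequality at coordinate $j$). So $U^\pi\phi_\pi$ is polyhedral; every word in $U^\pi\subseteq W^\pi$ is geodesic because $U^\pi\phi_\pi\subseteq V^\pi$; every element of $G$ is represented by its chosen representative, which lies in some $U^\pi$; and no element is represented twice, since a repeat would produce two minima of the same finite ordered set.

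The main obstacle is bookkeeping rather than conceptual: one must verify that ``is geodesic'' and ``is the chosen representative'' genuinely unfold into the operations under which polyhedral sets are closed --- finite unions, finite intersections, complements, direct products, and affine images and preimages --- which rests entirely on the affine descriptions (i)--(iii) coming from Benson's patterned-word machinery, together with the projection statement of Proposition \ref{prop:polyaffine}. The only other point needing care is the finiteness observation guaranteeing that the chosen representative is well defined, and this is immediate from the positivity of the weights $\|x_i\|$.
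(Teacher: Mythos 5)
Your overall strategy is the right one and matches the cited Section~12 of Benson: encode geodesy and a canonical selection as polyhedral conditions on the coordinate vectors, using Proposition~\ref{prop:affA}, the affine weight formula, and Proposition~11.3 to reduce all comparisons to the finite set $P$ of patterns; then close up under finite unions, intersections, complements, products, and affine images/preimages. The lexicographic-minimum selection for uniqueness is a clean way to guarantee exactly one survivor per group element, and your observation that $\{(v,v')\mid v'<_{\mathrm{lex}}v\}$ is a finite union of basic polyhedral sets is correct.

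There is, however, a gap in the definition of $N^\pi$ that must be patched before the argument is sound. You define $E_{\pi,\pi'}\subseteq\Z^{m_\pi+m_{\pi'}}$ by \emph{extending} $\cA^\pi,\cA^{\pi'}$ to all of $\Z^{m}$, and then in $N^\pi$ quantify $\exists\,v'$ with no sign restriction. Since $\cA^{\pi'}$ generically has nontrivial kernel, there can exist $v'$ with some negative entries satisfying $v\cA^\pi=v'\cA^{\pi'}$, $t_\pi=t_{\pi'}$, and $\|v'\|+\|\pi'\|<\|v\|+\|\pi\|$ even when $v$ is genuinely geodesic — such a $v'$ does not correspond to any word in $W^{\pi'}$, so it does not witness non-geodesy. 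As written, this ``phantom'' $v'$ would wrongly place $v$ in $N^\pi$ and remove it from $V^\pi$, and then $\phi_\pi^{-1}(V^\pi)$ could fail to be the full set of geodesic $\pi$-patterned words; in principle $\bigcup_{\pi\in P}U^\pi$ could then fail to cover $G$. The fix is immediate: replace $E_{\pi,\pi'}$ by $E_{\pi,\pi'}\cap\bigl(\Z^{m_\pi}\times\Z_{\geq0}^{m_{\pi'}}\bigr)$ (equivalently, restrict the quantifier to $v'\in\Z_{\geq0}^{m_{\pi'}}$); the non-negative orthant is polyhedral, so all closure arguments survive. With this one repair the proof is correct. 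Your conditions (a) and (b) already quantify over $v'\in V^{\pi'}$ or $v'\in V^\pi$, which live in the non-negative orthant, so that part needs no change.
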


	\begin{cor}
		The weighted growth series $\mbS_{G\subseteq G}(z)$ of $G$ is rational, with respect to all generating sets.
	\end{cor}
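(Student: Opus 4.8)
The plan is to assemble pieces that are already in place: the decomposition of $G$ into the patterned sets $U^\pi$, the fact that each image $U^\pi\phi_\pi$ is polyhedral, and the rationality of weighted growth series of polyhedral sets from Corollary \ref{cor:rationalpoly}.

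First I would fix an arbitrary finite weighted generating set $S$ for $G$ and pass to the extended generating set $\wt{S}$, recalling that $\|g\|_\sim=\|g\|$ for every $g\in G$, so that the weighted growth series $\mbS_{G,G}(z)$ is the same for $S$ and $\wt{S}$; this is exactly what lets us restrict to the finite set of patterns $P$. By Theorem \ref{thm:Upi} the map sending a word to the group element it represents restricts to a bijection $\bigcup_{\pi\in P}U^\pi\to G$, and every word in every $U^\pi$ is geodesic. Counting group elements by weight and using this bijection gives
\[
	\sigma_{G\subseteq G,\wt{S}}(n)=\sum_{\pi\in P}\#\{w\in U^\pi\mid\|w\|=n\}.
\]

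Next I would transport each inner count to $\Z^{m_\pi}$ via the bijection $\phi_\pi\colon W^\pi\to\Z_{\geq0}^{m_\pi}$, which shifts weight by the constant $\|\pi\|$, that is $\|w\phi_\pi\|=\|w\|-\|\pi\|$. Hence $\#\{w\in U^\pi\mid\|w\|=n\}=\sigma_{U^\pi\phi_\pi}(n-\|\pi\|)$ (interpreting the count as $0$ for negative argument), and summing over $n$ yields
\[
	\mbS_{G,G}(z)=\sum_{\pi\in P}z^{\|\pi\|}\,\mbS_{U^\pi\phi_\pi}(z).
\]
By Theorem \ref{thm:Upi} each $U^\pi\phi_\pi$ is a polyhedral subset of $\Z^{m_\pi}$, so Corollary \ref{cor:rationalpoly} gives $\mbS_{U^\pi\phi_\pi}(z)\in\Q(z)$. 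A finite sum of rational functions (with the factors $z^{\|\pi\|}$) is again rational, so $\mbS_{G,G}(z)\in\Q(z)$; and since $S$ was an arbitrary finite weighted generating set, this holds with respect to all of them.

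I do not expect a genuine obstacle: all the substantive content — the existence and polyhedrality of the sets $U^\pi$, and the rationality statement for polyhedral sets — is already contained in Theorem \ref{thm:Upi} and Corollary \ref{cor:rationalpoly}. The only points requiring care are the weight bookkeeping (tracking the shift by $\|\pi\|$ induced by $\phi_\pi$) and the observation that, because $\bigcup_{\pi\in P}U^\pi$ maps bijectively onto $G$, the growth functions add with no overcounting, so the sum of rational series is legitimately $\mbS_{G,G}(z)$.
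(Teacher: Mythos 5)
Your argument is correct and is essentially the paper's own proof: the paper also decomposes $G$ into the sets $U^\pi$ with $\pi\in P$, notes that $\phi_\pi$ shifts weight by $\|\pi\|$, and concludes $\mbS_{G,G}(z)=\sum_{\pi\in P}z^{\|\pi\|}\mbS_{U^\pi\phi_\pi}(z)$ with each summand rational by polyhedrality. The only (harmless) difference is that you invoke Corollary \ref{cor:rationalpoly} for general polyhedral sets, whereas the paper uses Proposition \ref{prop:polyhedralrationalgrowth} directly, which suffices here because $U^\pi\phi_\pi\subseteq\Z_{\geq0}^{m_\pi}$ is already a \emph{positive} polyhedral set.
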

	\begin{proof}
		The growth series $\mbS_{G\subseteq G}$ is precisely the growth series of $\bigcup_{\pi\in P} U^\pi$ as a set. From Definition \ref{def:phipi} we have \[\mbS_{U^\pi\subseteq G}(z) = z^{\|\pi\|}\mbS_{U^\pi\phi_\pi}(z)\] and thus \[\mbS_{G\subseteq G}(z)=\sum_{\pi\in P}z^{\|\pi\|}\mbS_{U^\pi\phi_\pi}(z)\] is rational, since each $\mbS_{U^\pi\phi_\pi}(z)$ is a positive polyhedral set and hence rational by Proposition \ref{prop:polyhedralrationalgrowth}

	\end{proof}

\subsection{Univariate growth series of algebraic sets}

We can now demonstrate our main result. This will be a consequence of a more general rationality criterion. First, we make the following definitions, extending the framework explained above to $n$-tuples of group elements.
\begin{dfn}
	Let $\upi=(\pi_1, \ \ldots, \ \pi_n)\in P^n$ be a tuple of patterns, with respect to $\wt{S}$.
	\begin{enumerate}
		\item Let $W^{\upi}=W^{\pi_1}\times\cdots\times W^{\pi_n}$ and $U^{\upi}=U^{\pi_1}\times\cdots\times U^{\pi_n}\subset\left(S^*\right)^n$. Note that $U^{\upi}$ is a polyhedral set by Proposition \ref{prop:polyclosed}.
		\item Let $m_{\upi}=\sum_{i=1}^n m_{\pi_i}$, and $\|\upi\|=\sum_{i=1}^n\|\pi_i\|$.
		\item Define a map $\phi_{\upi}\colon W^{\upi}\to\Z_{\geq0}^{m_{\upi}}$ in the natural way via
		\[(w_1, \ \ldots, \ u_n)\mapsto (w_1\phi_{\pi_1}, \ \ldots, \ u_n\phi_{\pi_n}).\] As in the above discussion, $\phi_{\upi}$ preserves the weight of words, up to a constant, i.e.
		\[\|(w_1, \ \ldots, \ u_n)\phi_{\upi} \|=\sum_{i=1}^n\|w_i\| - \|\upi\|.\]
		\item Given $\cA^{\pi_i}$ as in Proposition \ref{prop:affA}, define an integral affine transformation $\cA^{\upi}\colon\Z_{\geq0}^{\upi}\to\Z^{kn}$ in the natural way via
		\[(x_1, \ \ldots, \ x_n)\mapsto\left(x_1\cA^{\pi_1}, \ \ldots, \ x_n\cA^{\pi_n} \right)\in\Z^k\times\cdots\times\Z^k.\]
	\end{enumerate}
\end{dfn}

Now we define a class of subsets of finitely generated virtually abelian groups which is particularly amenable to study using the tools we have described.
\begin{dfn}\label{def:CWP}
	Let $T$ be a choice of transversal for the finite index normal subgroup $\Z^k$. A subset $V\subseteq G^n$ will be called \emph{coset-wise polyhedral} if, for each $\mathbf{t}=(t_1, \ \ldots, \ t_n)\in T^n$, the set
	\[V_{\mathbf{t}} =\left\{\left(g_1t_1^{-1}, \ g_2t_2^{-1}, \ \ldots, \ g_nt_n^{-1}\right)\mid (g_1, \ \ldots, \ g_n)\in V,~g_i\in\Z^kt_i\right\}\subseteq\Z^{kn}\]
	is polyhedral.
\end{dfn}
\begin{rmk}
	Note that the definition is independent of the choice of $T$. Indeed, suppose that we chose a different transversal $T'$ so that for each $t_j\in T$ we have $t'_j\in T'$ with $\Z^kt_j=\Z^kt'_j$. Then there exists $y_j\in\Z^k$ with $t_j=y_jt'_j$ for each $j$, and so $g{t'_j}^{-1}=g_jt_j^{-1}y_i$ for any $g\in\Z^kt_j=\Z^k t'_j$. So changing the transversal changes the set $V_{\mathbf t}$ by adding a constant vector $(y_1, \ \ldots, \ y_n)$, and so it remains polyhedral by Proposition \ref{prop:polyaffine}.
\end{rmk}


As an example of Definition \ref{def:CWP}, we provide a brief proof that subgroups are coset-wise polyhedral.
\begin{proposition}\label{prop:subgroupsCWP}
	Let $G$ be a virtually abelian group, with normal free abelian subgroup $\Z^k$, and let $H$ be any subgroup. Then $H$ is coset-wise polyhedral.
\end{proposition}
\begin{proof}
	By the Second Isomorphism Theorem, $H$ is itself virtually abelian, with finite-index (free) abelian subgroup $H\cap\Z^k$. Furthermore, $c:=[H\colon H\cap\Z^k]\leq[G\colon\Z^k]=:d$. Choose a set of representatives $\{t_1,\ \ldots,\ t_c\}$ for the cosets of $H\cap\Z^k$ in $H$, and extend this to a set of representatives $\{t_1,\ \ldots, \ t_c, t_{c+1}, \ \ldots, \ t_d\}$ for the cosets of $\Z^k$ in $G$. For each $t_i$ with $i\leq c$, the set
	\[H_{t_i}=\left\{ht_i^{-1}\,\middle\vert\, h\in H,~h\in\Z^kt_i\right\} = \left\{ht_i^{-1}\,\middle\vert\, h\in\left(H\cap\Z^k\right)t_i\right\}=H\cap\Z^k.\]
	For $i>c$, $H_{t_i}$ is empty. Now since $H\cap\Z^k$ is free abelian, it is a polyhedral set when viewed as a subset of $\Z^k$. The empty set is also polyhedral (as, say, the intersection of a pair of disjoint hyperplanes). Hence $H$ is coset-wise polyhedral.
\end{proof}
In light of Proposition \ref{prop:subgroupsCWP}, the following Theorem is in some sense a generalisation of Theorem 3.3 of \cite{Evetts}, namely that every subgroup has rational relative growth series.

\begin{theorem}\label{thm:rationaltuples}
	Let $G$ be virtually abelian, with normal free abelian subgroup $\Z^k$, and let $S$ be any finite weighted generating set. If $V\subseteq G^n$ is coset-wise polyhedral, then the weighted growth series $\mbS_{V\subseteq G^n,S}(z)$ is a rational function.
\end{theorem}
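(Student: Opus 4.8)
The plan is to reduce the growth series of $V$ to a finite sum of weighted growth series of positive polyhedral sets, and then apply Corollary~\ref{cor:rationalpoly} (via Proposition~\ref{prop:polyhedralrationalgrowth}). The key point is that Benson's machinery gives, for each tuple of patterns $\upi\in P^n$, a set $U^{\upi}\subseteq (S^*)^n$ of geodesic representatives whose images cover $G^n$ uniquely, and which is polyhedral after applying $\phi_{\upi}$. So first I would write $V$ as the disjoint union, over $\upi\in P^n$, of the sets of representatives lying in $U^{\upi}$ that actually represent a tuple in $V$; since $P$ is finite this is a finite union, and since representatives are unique across $\bigcup U^{\upi}$ the union is disjoint and the growth series add. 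Concretely, for each $\upi$ I set
\[
	V^{\upi} = \left\{ \mathbf{w}\in U^{\upi}\phi_{\upi} \mid \left(\overline{w_1}, \ \ldots, \ \overline{w_n}\right)\in V \text{ where } (w_1, \ \ldots, \ w_n)=\mathbf{w}\phi_{\upi}^{-1}\right\}\subseteq\Z_{\geq0}^{m_{\upi}}.
\]
Because $\phi_{\upi}$ preserves weight up to the additive constant $\|\upi\|$, the growth series of the representative set is $z^{\|\upi\|}\mbS_{V^{\upi}}(z)$, and $\mbS_{V\subseteq G^n,S}(z)=\sum_{\upi\in P^n} z^{\|\upi\|}\mbS_{V^{\upi}}(z)$.

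The crucial step is to show each $V^{\upi}$ is a polyhedral subset of $\Z_{\geq0}^{m_{\upi}}$. I would argue that $V^{\upi}$ is obtained from $U^{\upi}\phi_{\upi}$ by intersecting with the preimage of $V_{\mathbf t}$ under the integral affine transformation $\cA^{\upi}$, where $\mathbf t=(t_{\pi_1}, \ \ldots, \ t_{\pi_n})$ is the tuple of coset representatives determined by the patterns $\pi_i$ (so that $\overline{W^{\pi_i}}\subseteq\Z^k t_{\pi_i}$). Indeed, by Proposition~\ref{prop:affA}, for $(w_1, \ \ldots, \ w_n)\in W^{\upi}$ we have $\overline{w_i}=\left(w_i\phi_{\pi_i}\cA^{\pi_i}\right)t_{\pi_i}$, so $\left(\overline{w_1}t_{\pi_1}^{-1}, \ \ldots, \ \overline{w_n}t_{\pi_n}^{-1}\right)=\mathbf{w}\cA^{\upi}$, and this tuple lies in $V_{\mathbf t}$ precisely when $\left(\overline{w_1}, \ \ldots, \ \overline{w_n}\right)\in V$. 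Hence $V^{\upi}=\left(U^{\upi}\phi_{\upi}\right)\cap\left(V_{\mathbf t}\left(\cA^{\upi}\right)^{-1}\right)$. Now $U^{\upi}\phi_{\upi}$ is polyhedral by Theorem~\ref{thm:Upi} and Proposition~\ref{prop:polyclosed}; $V_{\mathbf t}$ is polyhedral by the coset-wise polyhedral hypothesis; its preimage under $\cA^{\upi}$ is polyhedral by Proposition~\ref{prop:polyaffine}; and the intersection is polyhedral by Proposition~\ref{prop:polyclosed}. Therefore $V^{\upi}$ is polyhedral, and Corollary~\ref{cor:rationalpoly} gives that $\mbS_{V^{\upi}}(z)$ is rational.

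Finally, assembling the pieces, $\mbS_{V\subseteq G^n,S}(z)=\sum_{\upi\in P^n} z^{\|\upi\|}\mbS_{V^{\upi}}(z)$ is a finite sum of rational functions, hence rational, and since the weighted growth series is unchanged when passing to the extended generating set $\wt{S}$, the result holds for the original $S$ as well.

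The main obstacle I anticipate is bookkeeping rather than conceptual: one must be careful that the transversal element $t_{\pi_i}$ attached to pattern $\pi_i$ by Benson's construction agrees (up to the harmless $\Z^k$-translation noted in the remark after Definition~\ref{def:CWP}) with the transversal $T$ used to define $V_{\mathbf t}$, so that the identity $\mathbf{w}\cA^{\upi}\in V_{\mathbf t}\iff\left(\overline{w_1}, \ \ldots, \ \overline{w_n}\right)\in V$ is exactly correct; and one must double-check that restricting $\cA^{\pi_i}$ to the cone $\Z_{\geq0}^{m_{\pi_i}}$ (rather than all of $\Z^{m_{\pi_i}}$) causes no trouble, which it does not since $U^{\pi_i}\phi_{\pi_i}$ already lives in that cone and preimages of polyhedral sets under integral affine transformations are polyhedral by Proposition~\ref{prop:polyaffine}.
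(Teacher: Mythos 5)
Your proposal is correct and follows essentially the same route as the paper: you identify $V^{\upi}$ with $\bigl(U^{\upi}\phi_{\upi}\bigr)\cap V_{\mathbf t}\bigl(\cA^{\upi}\bigr)^{-1}$, appeal to Theorem \ref{thm:Upi} and Propositions \ref{prop:polyclosed}--\ref{prop:polyaffine} to get polyhedrality, and sum the resulting rational series. The only organizational difference is that the paper first partitions $V$ by $\mathbf t\in T^n$ and then sums over $\upi\in P_{\mathbf t}$, whereas you sum directly over $\upi\in P^n$ and extract the associated $\mathbf t$ from the patterns; since $P^n=\bigsqcup_{\mathbf t\in T^n}P_{\mathbf t}$, these are the same decomposition.
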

\begin{proof}
	Fix a transversal $T$. For each $\mathbf{t}\in T^n$, let $P_{\mathbf t}\subset P^n$ denote the set of $n$-tuples of patterns of the form $\upi=(\pi_1, \ \ldots, \ \pi_n)$ where each $\pi_i\in\Z^kt_i$. Let $U^{\mathbf \pi}=U^{\pi_1}\times\cdots\times U^{\pi_n}\subset\left(S^*\right)^n$. Then by Theorem \ref{thm:Upi}, the disjoint union $\bigcup_{\upi\in P_{\mathbf t}}U^{\upi}$ consists of exactly one $n$-tuple of geodesic representatives for each $n$-tuple in $\Z^kt_1\times\cdots\times\Z^kt_n$. We are only interested in $n$-tuples of elements which lie in the set $V$. Each element of $V$ lies in a unique product of cosets, so we partition $V$ into such products:
	\begin{align}\label{eq:Vunion}
	V = \bigcup_{\mathbf{t}\in T^n}\left\{(g_1, \ \ldots, \ g_n)\in V\mid g_i\in\Z^kt_i\right\} = \bigcup_{\mathbf{t}\in T^n}\left\{(g_1, \ \ldots, \ g_n)\in G^n\mid (g_1t_1^{-1}, \ \ldots, \ g_nt_n^{-1})\in V_{\mathbf{t}} \right\}.
	\end{align}
	Now, for a fixed $\mathbf{t}$, $(g_1, \ \ldots, \ g_n)$ has a unique geodesic representative in the set $U^{\upi}$, for some $\upi\in P_{\mathbf t}$ determined by $\mathbf{t}$. So the growth series of each component in the union \eqref{eq:Vunion} is equal to the growth series of the set
	\begin{equation*}
	\bigcup_{\upi\in P_{\mathbf t}}\left\{(u_1, \ \ldots, \ u_n)\in U^{\upi}\mid (u_1\phi_{\pi_1}\cA^{\pi_1}, \ \ldots, \ u_n\phi_{\pi_n}\cA^{\pi_n})\in V_{\mathbf t} \right\} = \bigcup_{\upi\in P_{\mathbf t}} V_{\mathbf t}(\phi_{\upi}\cA^{\upi})^{-1}\cap U^{\upi}.
	\end{equation*}

	Applying the map $\phi_{\upi}$ to a component of the union yields the set
	\begin{equation*}
	\left\{(u_1\phi_1, \ \ldots, \ u_n\phi_n)\in U^{\upi}\phi_{\upi}\mid (u_1\phi_{\pi_1}\cA^{\pi_1}, \ \ldots, \ u_n\phi_{\pi_n}\cA^{\pi_n})\in V_{\mathbf t}\right\} = V_{\mathbf t}\left(\cA^{\upi}\right)^{-1}\cap U^{\upi}\phi_{\upi}.
	\end{equation*}
	Now by Propositions \ref{prop:polyclosed} and \ref{prop:polyaffine}, this last set is polyhedral, and so has rational growth. Since both $T^n$ and $P_{\mathbf t}$ are finite, the growth series of $V$ is a finite sum of growth series of sets of the form $V_{\mathbf t}\left(\cA^{\upi}\right)^{-1}\cap U^{\upi}\phi_{\upi}$ (each multiplied by $z^{\|\upi\|}$ for the appropriate $\upi$) and is therefore rational, finishing the proof.
\end{proof}

We can now prove the main result of this section.

\begin{proof}[of Theorem \ref{thm:soln_sets_rational}.]
	Let $\cS$ denote an algebraic set. By Theorem \ref{thm:rationaltuples}, it suffices to show that $\cS$ is coset-wise polyhedral. By Lemma \ref{lem:virtual_sols} we have
	\begin{align*}
	\cS&= \bigcup_{(t_1, \ldots, t_n)\in B}\left\{(h_1t_1, \ \ldots, \ h_nt_n)\mid (h_1, \ \ldots, \ h_n)\in \cS_{(t_1,\ldots,t_n)}\right\} \\
	&= \bigcup_{(t_1,\ldots,t_n)\in T^n}\left\{(h_1t_1, \ \ldots, \ h_nt_n)\mid (h_1, \ \ldots, \ h_n)\in \cS_{(t_1,\ldots,t_n)} \right\}
	\end{align*}
	where each $\cS_{(t_1,\ldots,t_n)}$ is the solution set to some system of twisted equations in $\Z^k$ (and is empty for $(t_1, \ \ldots, \ t_n)\notin B$. By Lemma \ref{lem:twistedpoly}, each $\cS_{(t_1,\ldots,t_n)}$ is a polyhedral subset of $\Z^{kn}$, and thus $\cS$ is coset-wise polyhedral as required.
\end{proof}

For clarity, we explicitly state the description of algebraic sets in terms of polyhedral sets, which is a consequence of the proof of Theorem \ref{thm:soln_sets_rational}.
\begin{cor}\label{rem:algstatement}
	Let G be a finitely generated virtually abelian group (with a finite-index free abelian normal subgroup $\Z^k$ for some $k$). Choose a transversal $T$. Suppose $\cS\subset G^n$ is an algebraic set. Then for each $\mathbf{t}=(t_1, \ \ldots, \ t_n)\in T^n$, there exists a polyhedral set $\cS_{\mathbf{t}}\subseteq\Z^{kn}$ such that $\cS$ decomposes as a finite disjoint union:
	\[\cS = \bigcup_{\mathbf{t}\in T^n} \left\{(g_1,\ \ldots, \ g_n)\in \Z^kt_1\times\cdots\times\Z^kt_n\,\middle\vert\, (g_1t_1^{-1},\ \ldots, \ g_nt_n^{-1})\in\cS_{\mathbf{t}} \right\}. \]
\end{cor}

\subsection{Multivariate Growth Series}
We now turn to the \emph{multivariate} growth series (see Definition \ref{def:alggrowth}) and demonstrate that for an algebraic set $V$, the multivariate growth series $\mathbb{M}_{V\subseteq G^n,S}(z)$ is a \emph{holonomic} function.

\begin{dfn}\label{def:MGS}
	For clarity, we also define the multivariate growth series of a language. Let $L$ be a language over some finite weighted alphabet $A=\{a_1,\ldots,a_r\}$ (with weights denoted $\|a_i\|$) and let $|w|_i$ denote the number of occurrences of $a_i$ in a word $w\in L$. The \emph{weighted multivariate growth series} of $L$ is the formal power series
		\[\sum_{w\in L} z_1^{\|a_1\|\cdot|w|_1} z_2^{\|a_2\|\cdot|w|_2} \cdots z_r^{\|a_r\|\cdot|w|_r}\in\Q[[z_1,z_2,\ldots,z_r]].\]
\end{dfn}

Let $\mathbf{z}=(z_1,\ldots,z_n)$ and $\partial_{z_i}$ denote the partial derivative with respect to $z_i$.
\begin{dfn}
	A multivariate function $f(\mathbf{z})$ is \emph{holonomic} if the span of the set of partial derivatives
	\[\{\partial_{z_1}^{j_1}\partial_{z_2}^{j_2}\cdots\partial_{z_n}^{j_n} f(\mathbf{z})\mid j_i\in\Z_{\geq0}\}\] over the ring of rational functions $\C(\mathbf{z})$ is finite-dimensional.
\end{dfn}
From this definition, we see that a function is holonomic if and only if it satisfies a linear differential equation involving partial derivatives of finite order, and rational coefficients, for each variable $z_i$. Holonomic functions thus generalise the class of algebraic functions. For a more complete introduction to this topic, see \cite{Flajolet}.

In recent work \cite{Bishop}, Bishop extends results of Massazza \cite{Massazza} to show that a certain class of formal languages has holonomic multivariate growth series. The following Lemma follows easily from Proposition 4.3 of \cite{Bishop}, and the fact that holonomic functions are closed under algebraic substitution (Theorem B.3 of \cite{Flajolet}).
\begin{lem}\label{lem:polyholo}
	The weighted multivariate growth series of a polyhedral set (viewed as a formal language over the alphabet consisting of standard basis vectors) is holonomic.
\end{lem}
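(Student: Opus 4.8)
The plan is to reduce, in two easy steps, to the case of a \emph{positive} polyhedral set with all weights equal to $1$, and then quote Bishop's theorem. Let $\cP\subseteq\Z^r$ be polyhedral, with weight $\|e_i\|\in\Z_{>0}$ attached to the $i$th standard basis vector, so that a point $\mathbf a=(a_1,\dots,a_r)\in\cP$ contributes the monomial $\prod_{i=1}^r z_i^{\|e_i\|\,|a_i|}$ to the weighted multivariate growth series. First I would strip off the weights: for a \emph{positive} polyhedral set $\cP\subseteq\Z_{\geq0}^r$, the weighted series is obtained from the unweighted one $f(z_1,\dots,z_r)=\sum_{\mathbf a\in\cP}z_1^{a_1}\cdots z_r^{a_r}$ by the monomial substitution $z_i\mapsto z_i^{\|e_i\|}$, which is an algebraic substitution, so by Theorem B.3 of \cite{Flajolet} it is enough to prove that $f$ is holonomic. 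Then I would identify $f$ with the multivariate growth series (in the sense of Definition \ref{def:MGS}) of the bounded language
\[
  L_\cP=\{\,e_1^{a_1}e_2^{a_2}\cdots e_r^{a_r}\mid(a_1,\dots,a_r)\in\cP\,\}\subseteq e_1^*e_2^*\cdots e_r^*
\]
over the alphabet $\{e_1,\dots,e_r\}$, noting that $|w|_{e_i}=a_i$ for the word $w$ coding $\mathbf a$, so the unweighted multivariate growth series of $L_\cP$ is exactly $f$. By Definition \ref{def:polyhedral}, membership in $\cP$ is a Boolean combination of linear equalities, linear inequalities, and congruence conditions on the exponents, i.e.\ $\cP$ is a semilinear (Presburger-definable) subset of $\Z_{\geq0}^r$; this places $L_\cP$ in the class of languages covered by Proposition 4.2 of \cite{Bishop}, which yields that $f$ is holonomic.

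Next I would handle a general (not necessarily positive) polyhedral set $\cP\subseteq\Z^r$ by the orthant decomposition already used in the proof of Corollary \ref{cor:rationalpoly}: write $\cP=\bigsqcup_{j}\cP_j$ as a finite \emph{disjoint} union, where each $\cP_j$ lies in a single orthant and is carried, by a composition of reflections in coordinate hyperplanes (an integral affine transformation, Proposition \ref{prop:polyaffine}), onto a positive polyhedral set $\cP_j^{+}\subseteq\Z_{\geq0}^r$. Such a sign flip changes each coordinate only by its sign, hence preserves every $|a_i|$ and therefore preserves the monomial $\prod_i z_i^{\|e_i\|\,|a_i|}$; so the weighted multivariate growth series of $\cP_j$ equals that of $\cP_j^{+}$, which is holonomic by the previous paragraph. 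Since the union is disjoint, the weighted multivariate growth series of $\cP$ is the sum of those of the $\cP_j$, and a finite sum of holonomic series is again holonomic (immediate from the stated definition: the span of partial derivatives of $f+g$ is contained in the sum of the two finite-dimensional spans). This completes the reduction and the proof.

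The one point requiring genuine care — rather than a one-line citation — is verifying that the class of languages in Proposition 4.2 of \cite{Bishop} really does contain the bounded languages $L_\cP$ coming from polyhedral sets; in particular one must check that the congruence (``mod $b$'') clauses of Definition \ref{def:polyhedral} are admissible, equivalently that $\cP$ is semilinear and fits Bishop's hypotheses, which are phrased in terms of (semi)linear constraints. Everything else — the algebraic-substitution and finite-sum closure properties of holonomic functions, the orthant splitting, and the bookkeeping identity $|w|_{e_i}=a_i$ — is routine.
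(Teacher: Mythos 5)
Your proof is correct and follows essentially the same route as the paper, which states this lemma as an immediate consequence of Bishop's Proposition 4.2 and the closure of holonomic functions under algebraic substitution (Theorem B.3 of \cite{Flajolet}), giving no further details. The steps you supply --- the monomial substitution $z_i\mapsto z_i^{\|e_i\|}$ to eliminate weights, the encoding of a positive polyhedral set as the bounded language $L_\cP\subseteq e_1^*\cdots e_r^*$, and the orthant decomposition to reduce the general case to the positive one --- are exactly what the paper leaves implicit; note that in the paper's actual use of this lemma (the proof of Theorem \ref{thm:CWPholonomic}) the relevant polyhedral sets are already positive, being of the form $V_{\mathbf t}(\cA^{\upi})^{-1}\cap U^{\upi}\phi_{\upi}\subseteq\Z_{\geq0}^{m_{\upi}}$, so your orthant-splitting step, while a correct and worthwhile clarification of the statement as written, is not strictly needed for the intended applications.
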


As in the univariate case, we prove a more general statement about coset-wise polyhedral subsets.
\begin{theorem}\label{thm:CWPholonomic}
	Let $V\subset G^n$ be a coset-wise polyhedral set of tuples of elements of a virtually abelian group $G$. Then the weighted multivariate growth series $\mathbb{M}_{V\subseteq G^n,S}$ is holonomic, with respect to any generating set $S$.
\end{theorem}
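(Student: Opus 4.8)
The plan is to follow the structure of the proof of Theorem \ref{thm:rationaltuples}, replacing the appeal to rationality of the univariate growth of polyhedral sets by the holonomicity statement of Lemma \ref{lem:polyholo}, together with closure of holonomic functions under algebraic substitution, products, and finite sums.

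First I would fix a transversal $T$ for $\Z^k$ in $G$ and partition $V$ exactly as in \eqref{eq:Vunion}, writing $V=\bigcup_{\mathbf t\in T^n}\{(g_1, \ \ldots, \ g_n)\in G^n\mid (g_1t_1^{-1}, \ \ldots, \ g_nt_n^{-1})\in V_{\mathbf t}\}$, where each $V_{\mathbf t}\subseteq\Z^{kn}$ is polyhedral by hypothesis. For a fixed $\mathbf t$, each tuple in the corresponding component has a unique tuple of geodesic representatives $(u_1, \ \ldots, \ u_n)\in U^{\upi}$ for a single $\upi=(\pi_1, \ \ldots, \ \pi_n)\in P_{\mathbf t}$, and $\|g_i\|=\|u_i\|=\|u_i\phi_{\pi_i}\|+\|\pi_i\|$. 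Hence, just as in Theorem \ref{thm:rationaltuples}, applying $\phi_{\upi}$ identifies this component with the (positive) polyhedral set $V_{\mathbf t}\left(\cA^{\upi}\right)^{-1}\cap U^{\upi}\phi_{\upi}\subseteq\Z_{\geq0}^{m_{\upi}}$, which is polyhedral by Propositions \ref{prop:polyclosed} and \ref{prop:polyaffine}. The contribution of this component to the weighted multivariate growth series of $V$ is then $\prod_{i=1}^n z_i^{\|\pi_i\|}$ times the series in which a point $(p_1, \ \ldots, \ p_n)\in\Z_{\geq0}^{m_{\pi_1}}\times\cdots\times\Z_{\geq0}^{m_{\pi_n}}$ of that polyhedral set contributes the monomial $\prod_{i=1}^n z_i^{\|p_i\|}$, where $\|p_i\|$ is the weighted norm coming from the weights of the generators of $A$.

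Next I would produce this series from Lemma \ref{lem:polyholo}. Viewing $V_{\mathbf t}\left(\cA^{\upi}\right)^{-1}\cap U^{\upi}\phi_{\upi}$ as a formal language over the $m_{\upi}$ standard basis vectors of $\Z^{m_{\upi}}$ gives, by Lemma \ref{lem:polyholo}, a holonomic multivariate growth series in variables $\zeta_1, \ \ldots, \ \zeta_{m_{\upi}}$. The series we actually want is obtained from it by the monomial substitution that sends each $\zeta_j$, when $j$ indexes a coordinate in the $i$-th block, to $z_i^{w_j}$, where $w_j$ is the weight of the corresponding generator of $A$. This is an algebraic substitution, so holonomicity is preserved by Theorem B.3 of \cite{Flajolet}; multiplying by the monomial $\prod_{i=1}^n z_i^{\|\pi_i\|}$ preserves it as well. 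Finally, since $T^n$ and each $P_{\mathbf t}$ are finite, the weighted multivariate growth series of $V$ is a finite sum of such holonomic functions and is therefore holonomic. As in the univariate case, the whole argument uses only Benson's structure theory (patterned words and the sets $U^\pi$), which applies to any finite weighted generating set, so the conclusion is generating-set independent.

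The main point requiring care is the grading: the lattice $\Z^{m_{\upi}}$ carries $m_{\upi}$ independent gradings, one per basis vector, while the multivariate growth series of $V$ has only $n$ variables, so the block of coordinates belonging to the $i$-th tuple entry must be collapsed onto the single variable $z_i$ (with the generator weights absorbed into the exponents). I expect verifying that this collapsing is genuinely an algebraic substitution in the sense of \cite{Flajolet}, so that Lemma \ref{lem:polyholo} transfers, to be the only delicate step; everything else is a direct reprise of the proof of Theorem \ref{thm:rationaltuples}.
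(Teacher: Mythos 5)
Your proof is correct and takes essentially the same approach as the paper: partition $V$ via the coset structure as in Theorem \ref{thm:rationaltuples}, pass through Benson's geodesic machinery to polyhedral sets, invoke Lemma \ref{lem:polyholo}, collapse variables by a monomial (hence algebraic) substitution, and take a finite sum. You are in fact somewhat more precise than the paper's terse argument, which loosely describes the polyhedral sets as living in $\Z^{kn}$ with growth series in $kn$ variables $z_{ij}$ and the pattern contribution as ``an appropriate constant''; as you correctly observe, the relevant sets $V_{\mathbf{t}}\left(\cA^{\upi}\right)^{-1}\cap U^{\upi}\phi_{\upi}$ live in $\Z_{\geq0}^{m_{\upi}}$, the substitution collapses $m_{\upi}$ (not $kn$) variables down to $n$ with the generator weights absorbed into the exponents, and the pattern contributes the monomial $\prod_{i=1}^n z_i^{\|\pi_i\|}$.
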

\begin{proof}
	Following the proof of Theorem \ref{thm:rationaltuples}, the coset-wise polyhedral set $V$ is represented by a finite disjoint union of polyhedral sets in $\Z^{kn}$, where $k$ is the dimension of the finite-index free abelian normal subgroup of $G$.

	Lemma \ref{lem:polyholo} implies that the weighted multivariate growth series of each of these polyhedral sets (in the sense of Definition \ref{def:MGS}) is holonomic. These series will involve $kn$ variables, say \[(z_{11},\ldots,z_{1k},\ z_{21},\ldots,z_{2k},\ \ldots,\ z_{n1},\ldots,z_{nk}).\] To obtain the weighted multivariate growth series of $V$ (in the sense of Definition \ref{def:alggrowth}), we need only set each $z_{ij}=z_i$ and multiply each of the finitely many growth series by an appropriate constant to account for the contribution from each pattern $\upi$. The closure properties of holonomic functions (Theorem B.3 of \cite{Flajolet}) ensure that the resulting growth series is still holonomic (with variables $z_1,\ldots,z_n$ corresponding to the variables in the system of equations).
\end{proof}
\begin{cor}\label{cor:holonomic}
	An algebraic set in a virtually abelian group has holonomic weighted multivariate growth series.
\end{cor}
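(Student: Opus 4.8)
The plan is to obtain this as an immediate consequence of Theorem \ref{thm:CWPholonomic}, which asserts that any coset-wise polyhedral subset of $G^n$ has holonomic weighted multivariate growth series. So the only thing that needs to be checked is that the solution set $\cS$ of an arbitrary system of equations in a virtually abelian group $G$ is coset-wise polyhedral, and this is precisely the content established inside the proof of Theorem \ref{thm:soln_sets_rational}.

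Concretely, I would recall that chain of reductions. Lemma \ref{lem:virtual_sols} writes $\cS$ as a finite union, indexed by transversal tuples $\mathbf{t}\in T^n$, of translates of solution sets $\cS_{\mathbf{t}}$ of systems of twisted equations in the free abelian normal subgroup $\Z^k$ (with $\cS_{\mathbf t}$ empty for $\mathbf t\notin B$). Lemma \ref{lem:Zsols} then identifies each $\cS_{\mathbf{t}}$ with the solution set of a finite system of ordinary $\Z$-linear equations in $kn$ variables, hence with a finite intersection of elementary sets of the form $\{\mathbf{x}\in\Z^{kn}\mid\mathbf{a}\cdot\mathbf{x}=b\}$; by Definition \ref{def:polyhedral} this is polyhedral (and the empty set is polyhedral too). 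Since $\cS_{\mathbf{t}}$ coincides with the set $V_{\mathbf{t}}$ of Definition \ref{def:CWP}, the set $\cS$ is coset-wise polyhedral, and Theorem \ref{thm:CWPholonomic} then finishes the argument.

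I do not expect a genuine obstacle here: the substantive work, namely expressing a coset-wise polyhedral set as a finite disjoint union of honest polyhedral sets in $\Z^{kn}$ (as in the proof of Theorem \ref{thm:rationaltuples}), invoking Lemma \ref{lem:polyholo} for each piece, and then collapsing the $kn$ variables $z_{ij}$ down to the $n$ variables $z_i$, is carried out in Theorem \ref{thm:CWPholonomic}. The only closure facts needed are that holonomicity is preserved under the monomial substitutions $z_{ij}\mapsto z_i$ and under finite $\C(\mathbf{z})$-linear combinations, both contained in Theorem B.3 of \cite{Flajolet}. Thus the corollary is a direct specialisation of Theorem \ref{thm:CWPholonomic} to the particular coset-wise polyhedral set $\cS$, and the proof amounts to little more than citing the two preceding results in sequence.
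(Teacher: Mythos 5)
Your proposal is correct and follows exactly the paper's route: cite Theorem \ref{thm:CWPholonomic} and observe (as in the proof of Theorem \ref{thm:soln_sets_rational}) that the solution set is coset-wise polyhedral. The paper's proof is just the one-line reference; your version spells out the same chain of reductions in more detail.
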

\begin{proof}
	The proof of Theorem \ref{thm:soln_sets_rational} shows that any algebraic set is coset-wise polyhedral.
\end{proof}

\section{Further work}
	It is hoped that Corollary \ref{cor:holonomic} can be improved upon. Many power series associated to structures in virtually abelian groups turn out to be rational (see \cite{Benson}, \cite{Evetts}, \cite{Bishop}) and therefore we make the following conjecture, noting that an affirmative answer would immediately imply Theorem \ref{thm:soln_sets_rational}.
	\begin{conjecture}
		The weighted multivariate growth series of any algebraic set of a finitely generated virtually abelian group is rational.
	\end{conjecture}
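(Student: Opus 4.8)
The plan is to refine the argument behind Theorems \ref{thm:soln_sets_rational} and \ref{thm:CWPholonomic}, replacing the holonomicity input (Lemma \ref{lem:polyholo}) by a genuine rationality statement. By the proof of Theorem \ref{thm:soln_sets_rational}, the solution set $\cS\subseteq G^n$ is coset-wise polyhedral. Running the decomposition from the proof of Theorem \ref{thm:rationaltuples} (passing to the extended generating set, so that only the finite pattern set $P$ occurs), the weighted multivariate growth series of $\cS$ is a finite sum, over $\mathbf t\in T^n$ and $\upi=(\pi_1,\ldots,\pi_n)\in P_{\mathbf t}$, of
\[
z_1^{\|\pi_1\|}\cdots z_n^{\|\pi_n\|}\cdot G_{\upi,\mathbf t}(z_1,\ldots,z_n),
\]
where $G_{\upi,\mathbf t}$ is the generating function of the positive polyhedral set $\cP_{\upi,\mathbf t}=\cS_{\mathbf t}(\cA^{\upi})^{-1}\cap U^{\upi}\phi_{\upi}\subseteq\Z^{m_{\upi}}_{\geq0}$ in which the single variable $z_i$ records the total \emph{weighted} entry-sum of the block of $m_{\pi_i}$ coordinates coming from the $i$-th tuple position (each coordinate weighted by the weight of the corresponding generator of $\Z^k$, exactly as in the weight formula for $\Z^{m_\pi}$ used above). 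Since this is a finite $\Q$-combination of functions of the form (monomial)$\,\times G_{\upi,\mathbf t}$, it suffices to prove that each $G_{\upi,\mathbf t}$ is a rational function of $z_1,\ldots,z_n$.

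The main step is therefore a strengthening of Proposition \ref{prop:polyhedralrationalgrowth}: for any positive polyhedral set $\cP\subseteq\Z^r_{\geq0}$, the \emph{full} multivariate generating function $F(\mathbf w)=\sum_{\mathbf p\in\cP}\mathbf w^{\mathbf p}\in\Q[[w_1,\ldots,w_r]]$ is a rational function of $w_1,\ldots,w_r$. Granting this, one obtains $G_{\upi,\mathbf t}$ from the relevant $F$ by the monomial substitution $w_j\mapsto z_{i(j)}^{\lambda_j}$, where $i(j)$ is the coordinate block containing $j$ and $\lambda_j\in\Z_{>0}$ is the weight of the associated generator; this substitution preserves rationality, and the resulting denominator is not identically zero because $\cP\subseteq\Z^r_{\geq0}$ forces $F$ to be an honest power series, analytic on the unit polydisc, while the substitution stays inside that polydisc near the origin (so the specialised function is finite near $\mathbf 0$, hence cannot have a vanishing reduced denominator). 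To prove rationality of $F$: by Definition \ref{def:polyhedral} and Proposition \ref{prop:polyclosed}, $\cP$ is a finite union of sets of the form $\Pi\cap\Lambda$, where $\Pi\subseteq\Z^r_{\geq0}$ is the lattice-point set of a rational polyhedron and $\Lambda$ is a coset of a finite-index sublattice of $\Z^r$; absorbing $\Lambda$ via an integral change of variables (Smith normal form) rewrites $\sum_{\mathbf p\in\Pi\cap\Lambda}\mathbf w^{\mathbf p}$ as $\mathbf w^{\mathbf v}\sum_{\mathbf c\in Q\cap\Z^r}\prod_k\nu_k^{c_k}$ for a \emph{pointed} rational polyhedron $Q$ (pointedness inherited from the orthant containment) and Laurent monomials $\nu_k$ in $\mathbf w$, which is rational by Barvinok's theorem on generating functions of lattice points in rational polyhedra; the finite union is handled by the valuation (inclusion--exclusion) property of that generating-function map. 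Alternatively, one can try to read this off directly from Benson's proof of Proposition \ref{prop:polyhedralrationalgrowth}, which effectively decomposes a positive polyhedral set into pieces of the form $\{\mathbf v+\sum_i c_i\mathbf a_i\mid c_i\in\Z_{\geq0}\}$ with the $\mathbf a_i$ linearly independent, for which $F=\mathbf w^{\mathbf v}\prod_i(1-\mathbf w^{\mathbf a_i})^{-1}$ is manifestly rational.

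Assembling these, the weighted multivariate growth series of $\cS$ is a finite sum of products of monomials with rational functions of $z_1,\ldots,z_n$, hence rational. The essential obstacle is precisely the multivariate strengthening of Benson's rationality theorem, together with the correct treatment of the modular (congruence) conditions appearing in polyhedral sets: the univariate statement quoted in Proposition \ref{prop:polyhedralrationalgrowth} does not suffice on its own, and one must either invoke lattice-point counting (Barvinok's theorem, or Stanley's theory of rational cones) or re-examine Benson's argument to confirm that it yields a sufficiently explicit simplicial decomposition. Once that is in hand, the coarsening substitution and the bookkeeping over transversal vectors and patterns are routine, using only results already established in this paper.
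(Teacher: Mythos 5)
This statement is not proved in the paper: it appears only as a conjecture in the final section, and the authors establish only the weaker holonomicity result (Corollary \ref{cor:holonomic}). So there is no proof of the paper's to compare yours against; what follows is an assessment of your argument on its own terms. Your reduction is sound and uses exactly the paper's machinery: coset-wise polyhedrality of the solution set (proof of Theorem \ref{thm:soln_sets_rational}), the decomposition over $\mathbf{t}\in T^n$ and $\upi\in P_{\mathbf t}$ from Theorem \ref{thm:rationaltuples}, and the weight bookkeeping $\|u_i\|=\|u_i\phi_{\pi_i}\|+\|\pi_i\|$, which correctly converts the multivariate series of $\cS$ into a finite sum of monomials times block-weighted generating functions of the positive polyhedral sets $\cS_{\mathbf t}(\cA^{\upi})^{-1}\cap U^{\upi}\phi_{\upi}$. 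You also correctly isolate the one genuinely new ingredient: a multivariate strengthening of Proposition \ref{prop:polyhedralrationalgrowth}. That strengthening is true --- positive polyhedral sets are exactly the Presburger-definable subsets of $\Z^r_{\geq0}$, i.e.\ semilinear sets, and by the Eilenberg--Sch\"utzenberger unambiguous decomposition every such set is a finite disjoint union of sets $\{\mathbf v+\sum_ic_i\mathbf a_i\mid c_i\in\Z_{\geq0}\}$ with linearly independent $\mathbf a_i\in\Z^r_{\geq0}$, whose full generating function is $\mathbf w^{\mathbf v}\prod_i(1-\mathbf w^{\mathbf a_i})^{-1}$; your Smith-normal-form-plus-Barvinok route also works. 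But as written you only sketch this step and explicitly label it an ``obstacle'', so the proposal is a credible strategy for settling the conjecture rather than a complete proof: the multivariate rationality statement is the entire content of the conjecture beyond what the paper already proves, and it must be proved or cited precisely.

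Two specific repairs are needed. First, your justification that the substitution $w_j\mapsto z_{i(j)}^{\lambda_j}$ cannot produce an identically vanishing denominator --- via analyticity on the polydisc --- is circular: if the substituted denominator were the zero polynomial, the expression $\tilde p/\tilde q$ would simply be undefined, and analyticity of the composite power series would not by itself show that series is rational. The correct argument performs the substitution on the explicit Stanley-type form above: each factor $1-\mathbf w^{\mathbf a_i}$ becomes $1-\mathbf z^{\mathbf b_i}$ with $\mathbf b_i$ a nonzero non-negative vector (since each $\lambda_j>0$ and $\mathbf a_i\neq\mathbf 0$), which is visibly not the zero polynomial. Second, when you invoke ``pointedness inherited from the orthant containment'', note that the real polyhedron cut out by the defining elementary sets need not lie in the orthant even when its lattice points do; one should first adjoin the inequalities $z_i\geq0$ to the description (harmless, since the set is contained in $\Z^r_{\geq0}$) before concluding that the polyhedron, and hence each reparametrised polyhedron $Q$, contains no line.
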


	A system of equations in a group $G$ is an example of a first order sentence, part of the first order theory of $G$. For groups with decidable first order theory (including virtually abelian groups \cite{elementary_gp_theories}), a natural generalisation is to study the sets of tuples of elements of $G$ that satisfy more general first order sentences. The present paper is intended to be the first step in an investigation of the formal language properties, and the growth series behaviour, of such more general \emph{definable sets}.

\section*{Acknowledgements}
We wish to thank Laura Ciobanu for her invaluable advice and guidance, as well as the anonymous referee for detailed comments and suggestions. Evetts was partially supported by EPSRC grant EP/R035814/1, and also wishes to thank the London Mathematical Society and the Erwin Schr\"odinger International Institute for Mathematics and Physics for support during the writing of this paper.

\nocite{appl_L_systems_GT}
\nocite{math_theory_L_systems}
\nocite{handbook_form_lang}
\nocite{eqns_hyp_grps_conf}
\bibliography{references}
\bibliographystyle{abbrv}
\end{document}